\newcommand{\enm}[1]{e(n,m;{#1})}
\newcommand{\snm}[1]{s(n,m;{#1})}
\newcommand{\SV}{\mathrm{SV}}
\newcommand{\tet}{\theta}
\newcommand{\ii}{\mathrm{i}}
\newcommand{\dd}{d}
\newcommand{\ee}{\mathrm{e}}
\newcommand{\cP}{\mathcal{P}}
\newtheorem{theorem}{Theorem}[section]
\newtheorem{lemma}{Lemma}[section]
\newtheorem{proposition}{Proposition}[section]
\theoremstyle{remark}
\newtheorem{remark}{Remark}[section]
\newtheorem{definition}{Definition}[section]
\newcommand{\Z}{{\mathbb Z}}
\newcommand{\C}{{\mathbb C}}
\newcommand{\R}{{\mathbb R}}
\newcommand{\T}{{\mathbb T}}
\newcommand{\cM}{{\mathcal M}}
\DeclareMathOperator{\spn}{span}
\begin{document}

\title[Super-Macdonald polynomials]{Super-Macdonald polynomials: orthogonality and Hilbert space interpretation}

\author{Farrokh Atai}
\address{Department of Mathematics, Kobe University, Rokko, Kobe 657-8501, Japan}
\email{farrokh@math.kobe-u.ac.jp}

\author{Martin Halln\"as}
\address{Department of Mathematical Sciences, Chalmers University of Technology and the University of Gothenburg, SE-412 96 Gothenburg, Sweden}
\email{hallnas@chalmers.se}

\author{Edwin Langmann}
\address{Department of Physics, KTH Royal Institute of Technology, SE-106 91 Stockholm, Sweden}
\email{langmann@kth.se}

\date{\today}

\begin{abstract}
The super-Macdonald polynomials, introduced by Sergeev and Veselov \cite{SV09a}, generalise the Macdonald polynomials to (arbitrary numbers of) two kinds of variables, and they are eigenfunctions of the deformed Macdonald-Ruijsenaars operators introduced by the same authors in \cite{SV04}.
 
We introduce a Hermitian form on the algebra spanned by the super-Macdonald polynomials, prove their orthogonality, compute their (quadratic) norms explicitly, and establish a corresponding Hilbert space interpretation of the super-Macdonald polynomials and deformed Macdonald-Ruijsenaars operators. This allows for a quantum mechanical interpretation of the models defined by the deformed Macdonald-Ruijsenaars operators. Motivated by recent results in the nonrelativistic ($q\to 1$) case, we propose that these models describe the particles and anti-particles of an underlying relativistic quantum field theory, thus providing a natural generalisation of the trigonometric Ruijsenaars model.
\end{abstract}

\maketitle

\tableofcontents

\section{Introduction}
\label{sec:intro}
As is well-known, the Macdonald polynomials \cite{Mac95} can be viewed as eigenfunctions of a commuting family of difference operators associated with a relativistic generalisation of the integrable quantum Calogero-Moser-Sutherland systems of trigonometric A-type \cite{Rui87}. Such relativistic quantum systems were originally conceived by Ruijsenaars as an integrable quantum mechanical description of a relativistic quantum field theory in 1+1 spacetime dimensions known as the quantum sine-Gordon theory, restricted to sectors where the particle number is fixed \cite{RS86,Rui01}.

While the standard Ruijsenaars systems account for one particle type, a relativistic quantum field theory typically has two kinds of particle: particles and anti-particles.
This strongly suggests to us that Ruijsenaars' systems should have generalisations allowing for two particle types, and we propose that, in the trigonometric regime, such a generalisation is given by the so-called deformed Macdonald-Ruijsenaars operators $\cM_{n,m;q,t}$ and $\cM_{n,m;q^{-1},t^{-1}} $ (specified in \eqref{cMnm1} below), and their joint eigenfunctions, the super-Macdonald polynomials, introduced and studied by Sergeev and Veselov \cite{SV04,SV09a}. The super-Macdonald polynomials $SP_\lambda((x_1,\ldots,x_n),(y_1,\ldots,y_m);q,t)$ depend on arbitrary numbers, $n$ and $m$, of {\em two} types of variables, $x_i$ and $y_j$, and we expect that these two variable types correspond to particles and anti-particles in an underlying quantum field theory.

In any quantum mechanical model, there is a scalar product providing the space of wave functions with a Hilbert space structure, and this structure is essential for the physical interpretation of the model. For the trigonometric Ruijsenaars systems and the Macdonald polynomials such a Hilbert space structure is provided by the scalar product denoted as $\langle\cdot,\cdot\rangle^\prime_n$ in Macdonald's book \cite{Mac95}. In particular, with respect to this scalar product, the commuting family of difference operators alluded to above, which include operators that define the Hamiltonian and momentum operator in the model, are self-adjoint and the Macdonald polynomials form an orthogonal system with explicitly known Hilbert space norms \cite{Mac95}. By contrast, for the deformed Macdonald-Ruijsenaars operators and the super-Macdonald polynomials, such a Hilbert space structure has been missing. Our main purpose with this paper is to provide this missing Hilbert space structure and thereby substantiate our proposal, as formulated above. Moreover, recent quantum field theory results in the nonrelativistic case \cite{AL17,BLL20}, discussed in Section~\ref{sec:final}, provide further support in favour of our proposal.

To describe our results in more detail, we recall from \cite{SV09a} that the super-Macdonald polynomials are joint eigenfunctions of a large commutative algebra of difference-operators, containing the so-called deformed Macdonald-Ruijsenaars operator introduced in \cite{SV04}:\footnote{Note that we use somewhat different conventions --- see Appendix~\ref{app:SV} for how the conventions are related.}
\begin{equation}
\label{cMnm1} 
\begin{split} 
\cM_{n,m;q,t} = & \frac{t^{1-n}}{1-q} \sum_{i=1}^n A_i(x,y;q,t)\big(T_{q,x_i}-1\big)\\
&+ \frac{q^{m-1}}{1-t^{-1}} \sum_{j=1}^m B_j(x,y;q,t)\big(T_{t^{-1},y_j}-1\big)
\end{split} 
\end{equation}
with coefficients
\begin{equation} 
\label{cMnm2} 
\begin{split}
A_i(x,y;q,t) &= \prod_{i^\prime \neq i}^n \frac{tx_i - x_{i^\prime}}{x_i- x_{i^\prime}}\cdot \prod_{j=1}^m \frac{t^{1/2} x_i - q^{1/2} y_j}{t^{1/2} x_i - q^{-1/2}y_j},\\
B_j(x,y;q,t) &= \prod_{j^\prime \neq j}^m \frac{q^{-1}y_j - y_{j^\prime}}{y_j - y_{j^\prime}}\cdot \prod_{i=1}^n \frac{q^{-1/2} y_j - t^{-1/2} x_i}{q^{-1/2} y_j - t^{1/2} x_i},
\end{split} 
\end{equation} 
and where $T_{q,x_i}$ and $T_{t^{-1},y_j}$ act on functions 
$f(x,y)$ of $x=(x_1,\ldots,x_n)\in\mathbb{C}^n$ and $y=(y_1,\ldots,y_m)\in\mathbb{C}^m$ by shifting $x_i\to qx_i$ and $y_j\to t^{-1}y_j$, respectively, while leaving the remaining variables unaffected.

For our Hilbert space results, it will be important to restrict attention to parameter values
\begin{equation*}
\label{qt} 
0<q <1,\ \ \  0<t<1.
\end{equation*}
However, as discussed briefly in the final paragraph of Section~\ref{sec:final}, some of our results extend analytically to complex $q$ and $t$ (with modulus in $(0,1)$).

Deformed Macdonald-Ruijsenaars operators first appeared in the $m=1$ case in work by Chalykh \cite{Cha97,Cha00}. Further examples, including deformed Koornwinder operators, were later obtained and studied by Feigin \cite{Fei05}, Sergeev and Veselov \cite{SV09b} and Feigin and Silantyev \cite{FS14}.

Taking $m=0$, the operator given by \eqref{cMnm1}--\eqref{cMnm2} reduces to
\begin{equation} 
\label{cMn0} 
\cM_{n;q,t} = \frac{t}{1-q}\cdot t^{-n}\sum_{i=1}^n \prod_{i^\prime \neq i}^n \frac{tx_i - x_{i^\prime}}{x_i- x_{i^\prime}} \big(T_{q,x_i}-1\big),
\end{equation} 
and the super-Macdonald polynomials reduce to the ordinary (monic symmetric) Macdonald polynomials $P_\lambda((x_1,\ldots,x_n);q,t)$. Note that, up to the overall factor $t/(1-q)$, $\cM_{n;q,t}$ \eqref{cMn0} coincides with the operator $E_n$ in Madonald's book \cite[Section~VI.4]{Mac95}. Moreover, $\cM_{n;q,t}$ is closely related to the trigonometric limit of the elliptic operator $\widehat{S}_1$  introduced by Ruijsenaars in \cite{Rui87}. The precise relationship, which was first observed by Koornwinder (in unpublished notes), is, e.g., detailed in \cite[Section 5.2]{vDie95} and \cite[Section 5.1]{Has97}. We recall that $\widehat{S}_1\pm \widehat{S}_{-1}$, where $\widehat{S}_{-1}$ is similarly related to $\cM_{n;q^{-1},t^{-1}}$, essentially amount to the Hamiltonian and momentum operator, respectively, in Ruijsenaars' model. This state of affairs suggests to us that the super-Macdonald operators define a quantum mechanical model generalising the trigonometric Ruijsenaars model by allowing two kinds of particles; see Appendix~\ref{app:relativistic} for a proof of the relativistic invariance of this generalized model. However, such an interpretation requires a compatible Hilbert space structure.

As is well known, the Macdonald polynomials form an orthogonal system on the $n$-dimensional torus $\T^n\equiv \T_1^n$, where
\begin{equation}
\label{tori}
\T_\xi^n = \big\{x=(x_1,\ldots,x_n)\in\C^n\mid |x_i|=\xi\ \ (i=1,\ldots,n)\big\}\ \ \ (\xi > 0),
\end{equation}
with respect to the weight function
\begin{equation}
\label{Deln}
\Delta_n(x;q,t) = \prod_{1\leq i\neq j\leq n}\frac{(x_i/x_j;q)_\infty}{(tx_i/x_j;q)_\infty},
\end{equation}
where $(a;q)_\infty=\prod_{k=0}^\infty (1-aq^k)$ is the usual $q$-Pochhammer symbol. Moreover, the corresponding (quadratic) norms are given by remarkably simple and explicit formulas \cite[Section VI.9]{Mac95}; see \eqref{prod0}--\eqref{Nn}.

These orthogonality results, together with the corresponding Hilbert space structure, entail a natural quantum mechanical interpretation of 
the Macdonald polynomials $P_\lambda((x_1,\ldots,x_n);q,t)$ and the commuting Macdonald-Ruijsenaars operators $\cM_{n;q,t}$ and $\cM_{n;q^{-1},t^{-1}}$; this is the trigonometric Ruijsenaars model.

In this paper, we obtain analogous results for the the super-Macdonald polynomials $SP_\lambda(x,y;q,t)$. More specifically, we establish orthogonality relations with respect to a sesquilinear form given by
\begin{equation} 
\label{prod0S}
\begin{split} 
\langle P,Q\rangle^\prime_{n,m;q,t} = \frac1{n!m!}\int_{\T_\xi^n} \frac{\dd x_1}{2\pi\ii x_1}\cdots  \frac{\dd x_n}{2\pi\ii x_n}  \int_{\T_{\xi^\prime}^m} \frac{\dd y_1}{2\pi\ii y_1}\cdots  \frac{\dd y_m}{2\pi\ii y_m}
\\ \times  \Delta_{n,m}(x,y;q,t) P(x,y)\overline{Q(\bar{x}^{-1},\bar{y}^{-1})},
\end{split}
\end{equation} 
with weight function
\begin{equation}
\label{wghtFunc}
\Delta_{n,m}(x,y;q,t) = \frac{\Delta_n(x;q,t)\Delta_m(y;t,q)}{\prod_{i=1}^n\prod_{j=1}^m(1-q^{-1/2}t^{1/2}x_i/y_j)(1-q^{-1/2}t^{1/2}y_j/x_i)},
\end{equation}
and where $P,Q$ are polynomials in the space spanned by the super-Macdonald polynomials, the bar denotes complex conjugation and
\begin{equation}
\label{barinv}
\bar{x}^{-1}:=(1/\bar x_1,\ldots,1/\bar x_n),\ \ \ \bar{y}^{-1}:=(1/\bar y_1,\ldots,1/\bar y_m).
\end{equation}
Furthermore, in order to ensure that we avoid the poles of the weight function, we integrate $x$ and $y$ over tori $\T_\xi^n$ and $\T_{\xi^\prime}^m$ with radii $\xi,\xi^\prime>0$ that are sufficiently separated. 
Our main results are:
\begin{enumerate}
\item[(I)] The expression \eqref{prod0S} defines a Hermitian product that is independent of $\xi,\xi^\prime>0$ provided $|\log(\xi/\xi^\prime)|>\frac12|\log(q/t)|$,
\item[(II)] the orthogonality relations $\langle SP_\lambda,SP_\mu\rangle^\prime_{n,m;q,t}=0$ hold true for all $\lambda\neq\mu$,
\item[(III)] the (squared) norms $\langle SP_\lambda,SP_\lambda\rangle^\prime_{n,m;q,t}$ are given by the simple and explicit formulas \eqref{zNnm}--\eqref{nzNnm}.
\end{enumerate}

\begin{remark} 
\label{remarkxi}
The attentive reader might wonder why we do not simply integrate over $\T^n\times\T^m$ in \eqref{prod0S}  since, clearly, poles in the denominator of \eqref{wghtFunc} would also be avoided by choosing $\xi=\xi^\prime=1$. 
This can be readily understood in the simplest non-trivial case $n=m=1$ since, in this case, the integral in \eqref{prod0S} can be easily computed; see Appendix~\ref{app:nm1} for details. 
One finds that the integral is the same for $\xi\gg\xi^\prime$ and $\xi^\prime\gg\xi$, but the integral for $\xi=\xi^\prime$ differs by a non-trivial residue term which spoils our orthogonality results, as described above.  
\end{remark}

Remarkably, even though we are working with a complex-valued weight function (since the denominator in \eqref{wghtFunc} is only real if $\xi=\xi^\prime$), we find that all norms are given by non-negative real numbers. 
In addition, the super-Macdonald polynomials with non-zero norms are characterised by the simple condition $\lambda_{n}\geq m\geq\lambda_{n+1}$.
As discussed in Section~\ref{sec:hil}, the product $\langle\cdot,\cdot\rangle^\prime_{n,m;q,t}$ therefore provides the space spanned by the super-Macdonald polynomials with non-zero norm with a Hilbert space structure allowing for a quantum mechanical interpretation of the model defined by the commuting deformed Macdonald-Ruijsenaars operators $\cM_{n,m,q,t}$ and $\cM_{n,m,q^{-1},t^{-1}}$. 

The results in this paper can be considered as natural $q$-deformations of the orthogonality relations and norm formula we obtained in \cite{AHL19} for the super-Jack polynomials. As compared to {\em loc.~cit.}, significant simplifications occur: Since the eigenvalues of $\cM_{n,m;q,t}$ separate the super-Macdonald polynomials $SP_\lambda$, there is no need to involve higher order eigenoperators; and the fact that $\Delta_{n,m}$ is a meromorphic function simplifies arguments involving contour deformations.

Our plan is as follows. 
In Section~\ref{sec:preliminary}, we briefly review known facts about the Macdonald functions (Section~\ref{sec:MF}) and super-Macdonald polynomials (Section~\ref{sec:sMP}) that we need. 
Our results can be found in Section~\ref{sec:results}: a precise formulation of our orthogonality result is given in Theorem~\ref{thm} (Section~\ref{sec:orth}), followed by a discussion of the Hilbert space interpretation of the super-Macdonald polynomials  suggested by this (Section~\ref{sec:hil}).   
The proof of Theorem~\ref{thm} is given in  Section~\ref{sec:proof}. 
We conclude with a short discussion of research questions motivated by our results in Section~\ref{sec:final}. 
Three appendices explain how the conventions on super-Macdonald polynomials we use are related to the ones of Sergeev and Veselov \cite{SV09a} (Appendix~\ref{app:SV}), prove the relativistic invariance of the generalized Ruijsenaars model  (Appendix~\ref{app:relativistic}), give proof details to make this paper self-contained (Appendix~\ref{app:proof}), and shortly discuss the special case $n=m=1$ (Appendix~\ref{app:nm1}).

\subsection*{Notation} 
We denote as $\cP$ the space of all partitions, i.e., $\lambda\in\cP$ means that $\lambda=(\lambda_1,\lambda_2,\ldots)$ with integers $\lambda_i\geq 0$ satisfying $\lambda_i\geq \lambda_{i+1}$, $i=1,2,\ldots$, and only finitely many $\lambda_i$'s non-zero; the non-zero $\lambda_i$'s are called {\em parts of $\lambda$}, and partitions differing only by a string of zeros at the end are not distinguished.
For any partition $\lambda$, $\ell(\lambda)$ is the number of parts of $\lambda$, and $|\lambda|$ is the sum of its parts; $\ell(\lambda)$ and $|\lambda|$ are called {\em length} and {\em weight} of $\lambda$, respectively. Moreover, for $\lambda\in\cP$, $\lambda'$ denotes the conjugate of $\lambda$ (so that the Young diagrams of $\lambda$ and $\lambda'$ are transformed into each other by reflection in the main diagonal). We also recall the definition of the dominance partial ordering on the set of partitions of a fixed weight: 
for $\lambda,\mu\in\cP$ such that $|\lambda|=|\mu|$, 
\begin{equation*}
\lambda\leq \mu \Leftrightarrow \sum_{i=1}^j \lambda_i \leq  \sum_{i=1}^j \mu_i \quad (j=1,2,\ldots). 
\end{equation*} 
In addition, for $\lambda,\mu\in\cP$, $\mu\subseteq\lambda$ is short for $\mu_i\leq \lambda_i$ for all $i$, and $\lambda\cup\mu$ denotes the partition obtained by merging and re-ordering the parts of $\lambda$ and $\mu$. 

For $N\in\Z_{\geq 2}$, $1\leq i\neq j\leq N$ means $i,j=1,\ldots,N$, $i\neq j$, and we write 
$$\prod_{i\neq j}^N\ \ \ \text{ short for } \ \ \ \prod_{\substack{1\leq i\leq N\\i\neq j}}.$$ 

For $z\in\C$, $\bar z$ is the complex conjugate of $z$. For $z=(z_1,\ldots,z_N)\in\C^N$ with $N\in\Z_{\geq 2}$, $z^{-1}$, $\bar z$ and $\bar z^{-1}$ are short for $(1/z_1,\ldots,1/z_N)$, $(\bar z_1,\ldots,\bar z_N)$ and $(1/\bar z_1,\ldots,1/\bar z_N)$, respectively. We write $\ii:=\sqrt{-1}$ and $\C^*:=\C\setminus\{0\}$.

\section{Prerequisites}
\label{sec:preliminary} 
We collect definitions and results we need, following Macdonald \cite{Mac95} in Section~\ref{sec:MF} and Sergeev and Veselov \cite{SV09a} in Section~\ref{sec:sMP}. 

\subsection{Macdonald functions}
\label{sec:MF}
Unless mentioned otherwise, $\lambda,\mu$ are arbitrary partitions.

\subsubsection{Ring of symmetric functions}
We consider the complex vector space $\Lambda=\Lambda_{\C}$ of symmetric functions in infinitely many variables $x=(x_1,x_2,\ldots)$ (we work over $\C$ since we are motivated by quantum mechanics). 
It can be defined as the space of all finite linear combinations, with complex coefficients, of  the symmetric monomial functions $m_\lambda$, labeled by partitions $\lambda$, and defined as follows:
\begin{equation}
\label{mlam} 
m_\lambda(x) := \sum_a x_1^{a_1}x_2^{a_2}\cdots,
\end{equation} 
where the sum is over all distinct permutations $a=(a_1,a_2,\ldots)$ of $\lambda=(\lambda_1,\lambda_2,\ldots)$.
Thus, the symmetric monomial functions constitute a (vector space) basis in $\Lambda$ labeled by partitions. 
Another such basis is given by the products
\begin{equation} 
\label{plam}
p_\lambda := \prod_{i=1}^{\ell(\lambda)} p_{\lambda_i}\quad (\lambda\in\cP)
\end{equation} 
of the Newton sums 
\begin{equation}
\label{pr}  
p_r(x) := \sum_{i\geq 1} x_i^r\quad (r\in\Z_{\geq 1}) . 
\end{equation} 
The space $\Lambda$ has a natural ring structure and, as such, is freely generated by the Newton sums $p_r$, $r\in\Z_{\geq 1}$. 

\subsubsection{Macdonald functions}
The space $\Lambda$ becomes a (pre-)Hilbert space when equipped with the scalar product $\langle\cdot,\cdot\rangle_{q,t}$ characterised by linearity in its first (and antilinearity in its second) argument and\footnote{To avoid possible confusion, we stress that this product is different from the one allowing for a quantum mechanical interpretation of the Macdonald polynomials.}
\begin{equation} 
\label{prodMac}
\langle p_\lambda,p_\mu\rangle_{q,t} = \delta_{\lambda\mu} z_\lambda \prod_{i=1}^{\ell(\lambda)}\frac{1-q^{\lambda_i}}{1-t^{\lambda_i}}, 
\end{equation} 
where $z_\lambda := \prod_{i=1}^{\lambda_1} i^{m_i}m_i!$ with $m_i=m_i(\lambda)$ the number of parts of $\lambda$ equal to $i$ (setting $i^0 0!=1$), and  $\delta_{\lambda\mu}$ the Kronecker delta.  

As proved in \cite{Mac95}, the Macdonald functions $P_\lambda=P_\lambda(x;q,t)\in \Lambda$, $\lambda\in\cP$, can be defined by the following two conditions: triangular structure, 
\begin{subequations} 
\begin{equation*} 
P_\lambda = m_\lambda  + \sum_{\mu<\lambda} u_{\lambda\mu} m_\mu 
\end{equation*} 
for certain coefficients $u_{\lambda\mu}=u_{\lambda\mu}(q,t)$, and orthogonality, 
\begin{equation*} 
\langle P_\lambda,P_\mu\rangle_{q,t} = 0\quad (\lambda\neq \nu).
\end{equation*} 
\end{subequations} 

It is known that the Macdonald functions $P_\lambda$ are eigenfunctions of the inverse limit $\cM_{q,t}$ of the operators $\cM_{n;q,t}$ \eqref{cMn0}:  
\begin{equation}
\label{PeigEq}
\cM_{q,t}P_\lambda(x;q,t) = d_\lambda(q,t)P_\lambda(x;q,t), 
\end{equation}
and the corresponding eigenvalues are given by
\begin{equation}
\label{dlam}
d_\lambda(q,t) = \sum_{i\geq 1}t^{1-i}\frac{q^{\lambda_i}-1}{1-q}.
\end{equation}
Moreover, the Macdonald functions are known to be invariant under $(q,t)\to(q^{-1},t^{-1})$, i.e., 
\begin{equation} 
\label{qinvtinv} 
P_\lambda(x;q^{-1},t^{-1})=P_\lambda(x;q,t). 
\end{equation} 

We recall the definition of the dual Macdonald functions $Q_\lambda=Q_\lambda(x;q,t)$: 
\begin{equation} 
\label{Q} 
Q_\lambda = b_\lambda P_\lambda
\end{equation} 
with $b_\lambda=b_\lambda(q,t):=\langle P_\lambda,P_\lambda\rangle^{-1}$ given by  
\begin{equation}
\label{blam} 
b_\lambda(q,t) = \prod_{(j,k)\in\lambda}\frac{1-q^{\lambda_j-k}t^{\lambda_k'-j+1}}{1-q^{\lambda_j-k+1}t^{\lambda_k'-j}} = \frac1{b_{\lambda^\prime}(t,q)}, 
\end{equation}
where the product is over all $(j,k)$ such that $j=1,\ldots,\ell(\lambda)$ and $k=1,\ldots,\lambda_j$. 
The two kinds of Macdonald functions obey $\langle P_\lambda,Q_\lambda\rangle = \delta_{\lambda\mu}$, and they are related by the Macdonald involution
\begin{equation} 
\label{omqt}
\omega_{q,t}: \Lambda\to\Lambda,\ p_r\mapsto (-1)^{r-1}\frac{1-q^r}{1-t^r}p_r\quad (r\in\Z_{\geq 1}) 
\end{equation} 
as follows, 
\begin{equation} 
\label{duality} 
\omega_{q,t}(P_\lambda(x;q,t)) = Q_{\lambda'}(x;t,q).
\end{equation}

We also make use of the fact that the dual Macdonald functions $Q_\lambda$, like the Macdonald functions $P_\lambda$, are homogenous of degree $|\lambda|$: 
\begin{equation}
\label{homQ} 
Q_\lambda(sx;q,t)= s^{|\lambda|}Q_\lambda(x;q,t) \quad (s\in\C^*). 
\end{equation} 

\subsubsection{Skew functions} 
Let $f^{\lambda}_{\mu\nu}=f^{\lambda}_{\mu\nu}(q,t):=\langle Q_\lambda,P_\mu P_\nu\rangle_{q,t}$. Then the skew functions $P_{\lambda/\mu}\in\Lambda$ can be  defined by
\begin{equation} 
\label{Pskew} 
P_{\lambda/\mu}(x;q,t) : = \sum_{\nu} f^{\lambda'}_{\mu'\nu'}(t,q) P_\nu(x;q,t)
\end{equation} 
with $P_\nu(x;q,t)$ the Macdonald functions. It is well-known that $f^{\lambda'}_{\mu'\nu'}$ is non-zero only if $\mu\subseteq\lambda$, $\nu\subseteq\lambda$, and $|\mu|+|\nu|=|\lambda|$. 

These skew functions are homogenous of degree $|\lambda|-|\mu|$: 
\begin{equation} 
\label{homP} 
P_{\lambda/\mu}(sx;q,t)= s^{|\lambda|-|\mu|}P_{\lambda/\mu}(x;q,t) \quad (s\in\C^*), 
\end{equation} 
and they appear in the following expansion of Macdonald functions $P_\lambda(z;q,t)$ for variables $z=(x,y)$ obtained by merging two infinite sets of variables $x=(x_1,x_2,\ldots)$ and $y=(y_1,y_2,\ldots)$:   
\begin{equation} 
\label{PPQ} 
P_\lambda(x,y;q,t) = \sum_{\mu\subseteq\lambda} P_{\lambda/\mu}(x;q,t)Q_\mu(y;q,t). 
\end{equation} 

In the following Lemma, we state a well-known technical result that we need.  
\begin{lemma} 
\label{lem:f} 
The coefficients $f^{\lambda^\prime}_{\mu^\prime\nu^\prime}(t,q)$ in \eqref{Pskew} are non-zero only if 
\begin{equation*} 
\mu\cup\nu\leq\lambda\leq\mu+\nu, 
\end{equation*}  
and in the extremal cases they are given by 
\begin{equation*} 
f^{(\mu\cup\nu)^\prime}_{\mu^\prime\nu^\prime}(t,q)=1,\ \ \ f^{(\mu+\nu)^\prime}_{\mu^\prime\nu^\prime}(t,q)=\frac{b_\mu(q,t)b_\nu(q,t)}{b_{\mu+\nu}(q,t)}.
\end{equation*} 
\end{lemma}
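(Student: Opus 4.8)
The plan is to prove Lemma~\ref{lem:f} by translating the statement about the structure constants $f^{\lambda'}_{\mu'\nu'}(t,q)$ into a statement about products of Macdonald functions via the defining pairing $f^{\lambda}_{\mu\nu}(q,t)=\langle Q_\lambda,P_\mu P_\nu\rangle_{q,t}$, and then reading off the dominance constraints from the triangular (monomial) expansion of $P_\mu P_\nu$. Concretely, writing $P_\mu P_\nu=\sum_\kappa c^\kappa_{\mu\nu}m_\kappa$ and expanding each $m_\kappa$ back in the $P$-basis, one sees that $f^\lambda_{\mu\nu}$ is non-zero only when $\lambda$ occurs in this expansion; since multiplication of the leading monomials gives $m_{\mu+\nu}$ as the dominant term, this forces $\lambda\le\mu+\nu$, while the smallest partition that can appear is $\mu\cup\nu$ (the merge of the two partitions is the minimum in dominance order among partitions of weight $|\mu|+|\nu|$ containing both $\mu$ and $\nu$ as the multiset union). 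Applying the conjugation duality $\lambda\mapsto\lambda'$, which reverses dominance order, then converts the bounds on $f^{\lambda}_{\mu\nu}$ into the stated bounds $\mu\cup\nu\le\lambda\le\mu+\nu$ on $f^{\lambda'}_{\mu'\nu'}$.

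First I would fix the precise combinatorial facts I need about dominance order: that for partitions of fixed weight, $\mu+\nu$ (componentwise sum) is the join-type upper bound and $\mu\cup\nu$ (sorted merge) is the lower bound among the partitions appearing in the Littlewood--Richardson-type expansion, and that conjugation $\lambda\mapsto\lambda'$ is an order-reversing involution, so that $\kappa\le\rho\iff\rho'\le\kappa'$. I would also record the standard identities $(\mu\cup\nu)'=\mu'+\nu'$ and $(\mu+\nu)'=\mu'\cup\nu'$, which are exactly what make the conjugation swap the two extremal partitions while exchanging the roles of the upper and lower bound. These identities, combined with the order-reversal, are what turn the ``product side'' bounds into the stated ``skew side'' bounds.

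For the extremal values, I would argue as follows. The top value $f^{(\mu+\nu)'}_{\mu'\nu'}$ corresponds to the coefficient of the leading (dominant) partition in the product expansion, which is a single highest term; tracking it through the normalisation relating $P$ and $Q$ (via $Q_\lambda=b_\lambda P_\lambda$ and $\langle P_\lambda,Q_\lambda\rangle=\delta_{\lambda\mu}$) and using homogeneity to pin down the proportionality constant yields the ratio $b_\mu(q,t)b_\nu(q,t)/b_{\mu+\nu}(q,t)$ after applying $(q,t)\leftrightarrow(t,q)$ and conjugation. The bottom value $f^{(\mu\cup\nu)'}_{\mu'\nu'}=1$ follows because the minimal term appears with multiplicity one and the relevant $b$-factors cancel in this extremal case; alternatively it can be extracted from the coproduct/branching formula for the $P$'s, where the $\mu\cup\nu$ component is the ``disjoint support'' contribution and is manifestly normalised to $1$. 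At each step I would be careful that the arguments $(t,q)$ (rather than $(q,t)$) and the primes on the partitions are handled consistently, since the definition \eqref{Pskew} mixes the conjugation and the parameter swap.

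The main obstacle I anticipate is not the dominance bounds, which follow cleanly from the triangularity of $P_\lambda$ in the monomial basis together with the order-reversing conjugation, but rather bookkeeping the two extremal \emph{values} correctly through the chain of dualities. In particular, verifying that the constant in the $\mu+\nu$ case is exactly $b_\mu b_\nu/b_{\mu+\nu}$ (and not its reciprocal, or the same expression with swapped parameters) requires carefully combining the duality \eqref{duality}, the relation \eqref{Q} between $P$ and $Q$, the self-reciprocity of the $b$-factors under conjugation in \eqref{blam}, and the homogeneity \eqref{homQ}--\eqref{homP}. Since this is a standard result in Macdonald's theory, however, I would ultimately cite the relevant computation in \cite{Mac95} for the product structure constants and then apply the conjugation-duality dictionary above to obtain the stated skew-function form, relegating the detailed constant-chasing to the self-contained verification in Appendix~\ref{app:proof}.
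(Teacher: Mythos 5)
Your plan follows essentially the same route as the paper's proof in Appendix~\ref{app:f} (which adapts Stanley's argument from the Jack case): triangularity of $P_\lambda$ in the monomial basis gives $f^{\lambda}_{\mu\nu}(q,t)=0$ unless $\lambda\leq\mu+\nu$ with $f^{\mu+\nu}_{\mu\nu}=1$; the order-reversing conjugation together with $(\mu\cup\nu)'=\mu'+\nu'$ transfers this to the lower bound and the value $1$; and the relation $f^{\lambda'}_{\mu'\nu'}(t,q)=\bigl(b_\mu(q,t)b_\nu(q,t)/b_\lambda(q,t)\bigr)f^{\lambda}_{\mu\nu}(q,t)$ obtained by applying $\omega_{q,t}$ yields the upper bound on the primed side and the $b$-ratio at the top. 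The one imprecision is your parenthetical suggesting that the lower bound $\mu\cup\nu\leq\lambda$ for $f^{\lambda}_{\mu\nu}$ can be read off directly from the monomial expansion of $P_\mu P_\nu$ --- triangularity gives only upper bounds in dominance order, and that inequality is in fact obtained precisely via the conjugation/$\omega$-duality you invoke in the following sentence.
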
 
(For the convenience of the reader, we give a proof in Appendix~\ref{app:f}.) 

\subsubsection{Macdonald polynomials}
\label{sec:MP}
The Macdonald polynomials $P_\lambda((x_1,\ldots,x_n);q,t)$ are obtained from the Macdonald functions $P_\lambda((x_1,x_2,\ldots);q,t)$ by setting $x_{i}=0$ for all $i>n$, and similarly for $Q_\lambda$ and $P_{\lambda/\mu}$.

It is know that $P_\lambda((x_1,\ldots,x_n);q,t)$ is non-zero only for partitions $\lambda=(\lambda_1,\ldots,\lambda_n)$ of length less or equal to $n$. 
Moreover, as already discussed in the introduction, the Macdonald polynomials are orthogonal with respect to the following scalar product, 
\begin{equation} 
\label{prod0} 
\langle P,Q\rangle^\prime_{n;q,t} :=  \int_{\T^n} \frac{dx_1}{2\pi\ii x_1} \cdots  \frac{dx_n}{2\pi\ii x_n}  \Delta_n(x;q,t) P(x)\overline{Q(x)} 
\end{equation} 
for $P,Q$ symmetric polynomials in the variables $x=(x_1,\ldots,x_n)\in\C^n$, $\T^n=\T_1^n$ as in \eqref{tori}, and $\Delta_n(x;q,t)$ in \eqref{Deln}: for all $P_\lambda=P_\lambda(x;q,t)$ with $x=(x_1,\ldots,x_n)\in\C^n$ and $\lambda=(\lambda_1,\ldots,\lambda_n)$,  
\begin{equation} 
\label{orth0} 
\langle P_\lambda,P_\mu\rangle^\prime_{n;q,t} = \delta_{\lambda\mu}N_n(\lambda;q,t),
\end{equation} 
where 
\begin{equation} 
\label{Nn} 
N_n(\lambda;q,t)=  \prod_{1\leq i<j\leq n} \frac{(q^{\lambda_i-\lambda_j}t^{j-i};q)_\infty (q^{\lambda_i-\lambda_j+1}t^{j-i};q)_\infty}{(q^{\lambda_i-\lambda_j}t^{j-i+1};q)_\infty(q^{\lambda_i-\lambda_j+1}t^{j-i+1};q)_\infty}.
\end{equation} 
We also need
\begin{equation} 
\label{box} 
(x_1\ldots x_n)^k P_\lambda((x_1,\ldots,x_n);q,t) = P_{\lambda+(k^n)}((x_1,\ldots,x_n);q,t) \quad (k\in\Z_{\geq 0})
\end{equation} 
where $\lambda+(k^n)=(\lambda_1+k,\ldots,\lambda_n+k)$. 

\subsection{Super-Macdonald polynomials}
\label{sec:sMP}
Following Sergeev and Veselov \cite{SV09a}, we define $\Lambda_{n,m;q,t}$  as the algebra of complex polynomials $P(x,y)$ in $n+m$ variables $(x,y)=(x_1,\ldots,x_n,y_1,\ldots,y_m)\in\C^n\times\C^m$ that are symmetric in each set of variables separately, i.e., 
\begin{equation} 
\label{Lamnm} 
P(\sigma x;\tau y) = P(x,y)\ \ \ ((\sigma,\tau)\in S_n\times S_m)
\end{equation} 
where $S_n$ is the group of permutations of $n$ objects, and, furthermore, that satisfy the symmetry conditions\footnote{See Appendix~\ref{app:SV} for details on how our conventions are related to the ones in \cite{SV09a}.}
\begin{equation}
\label{qinv}
\big(T_{q,x_i}-T_{t^{-1},y_j}\big)P(x,y) = 0\  \text{ at } \  q^{1/2}x_i = t^{-1/2}y_j \ \ \  (\forall i,j). 
\end{equation}
This algebra, $\Lambda_{n,m;q,t}$, is generated by the following {\em deformed Newton sums}, 
\begin{equation}
\label{pnmr}
p_{r}(x,y;q,t)= \sum_{i=1}^n x_i^r -\frac{q^{r/2}-q^{-r/2}}{t^{r/2}-t^{-r/2}}  \sum_{k=1}^my_j^r \ \ \ (r\in\Z_{\geq 1}) 
\end{equation}
for $(x,y)\in\C^n\times\C^m$ \cite[Theorem 5.8]{SV09a}. 

\begin{remark} 
Many results in \cite{SV09a} require a restriction to so-called non-special parameters $q,t$, i.e., $q^{i}t^{j}\neq 1$ for all $i,j\in\Z_{\geq 0}$ such that $i+j\geq 1$; see e.g.\ \cite[Theorem 5.8]{SV09a}.\footnote{Note that our $t$ is $t^{-1}$ in \cite{SV09a}.} 
However, since we assume $0<q,t<1$, we can ignore this restriction. 
\end{remark} 

The super-Macdonald polynomials were defined in \cite{SV09a} as the image of the Macdonald functions $P_\lambda$ under the homomorphism 
\begin{equation}
\label{varphi}
\varphi_{n,m;q,t}: \Lambda\to\Lambda_{n,m;q,t},\ p_r((x_1,x_2,\ldots))\mapsto p_r((x_1,\ldots,x_n),(y_1,\ldots,y_m);q,t). 
\end{equation}
Thus, if $c_{\lambda\mu}(q,t)$ are the coefficients of the Macdonald polynomials defined by the expansion
\begin{equation} 
\label{SPdef1} 
P_\lambda(x;q,t)=\sum_{\mu} c_{\lambda\mu}(q,t) p_\mu(x)\quad (x=(x_1,x_2,\ldots)), 
\end{equation} 
then 
\begin{equation} 
\label{SPdef2} 
SP_\lambda(x,y;q,t)=\sum_{\mu} c_{\lambda\mu}(q,t) p_\mu(x,y;q,t)\quad ((x,y)\in\C^n\times\C^m) 
\end{equation} 
where $p_\mu(x,y;q,t) =\prod_{i=1}^{\ell(\mu)}p_{\mu_i}(x,y;q,t)$. 

From \cite[Theorem 5.4]{SV09a}, we recall that $\varphi_{n,m;q,t}$ intertwines the operator $\cM_{q,t}$ and the deformed Macdonald-Ruijsenaars operator $\cM_{n,m;q,t}$ defined by \eqref{cMnm1}--\eqref{cMnm2}:
\begin{equation}
\label{intertw}
\varphi_{n,m;q,t}\circ \cM_{q,t} = \cM_{n,m;q,t}\circ  \varphi_{n,m;q,t}.
\end{equation}
Combining \eqref{PeigEq} with \eqref{intertw}, we immediately see that
\begin{equation}
\label{SPeigEq}
\cM_{n,m;q,t}SP_\lambda(x,y;q,t) = d_\lambda(q,t)SP_\lambda(x,y;q,t); 
\end{equation} 
cf.~\cite[Corollary 5.7]{SV09a}.
We note that \eqref{qinvtinv} implies that the coefficients $c_{\mu\nu}(q,t)$ in \eqref{SPdef1} are invariant under the transformation $(q,t)\to (q^{-1},t^{-1})$, and since the deformed Newton sums in \eqref{pnmr} also have this invariance, \eqref{SPdef2} implies 
\begin{equation} 
\label{inv} 
SP_\lambda(x,y;q^{-1},t^{-1}) =SP_\lambda(x,y;q,t). 
\end{equation} 
Thus, the super-Macdonald polynomials $SP_\lambda((x_1,\ldots,x_n),(y_1,\ldots,y_m);q,t)$ are also eigenfunctions of the deformed Macdonald-Ruijsenaars operator $\cM_{n,m;q^{-1},t^{-1}}$ with eigenvalue $d_\lambda(q^{-1},t^{-1})$. 

We also recall that $SP_\lambda(x,y;q,t)$ for $(x,y)\in\C^n\times\C^m$ is non-zero if and only if $\lambda$ belongs to the following set of partitions, 
\begin{equation} 
\label{Hnm} 
H_{n,m} := \{ \lambda = (\lambda_1,\lambda_2,\ldots)\in\cP \ | \ \lambda_{n+1} \leq m \} ; 
\end{equation} 
cf.~\cite[Theorem 5.6]{SV09a}. 

Below we give an explicit representation of the super-Macdonald polynomials needed in the proof of our main result (this is a slight refinement of a result in \cite{SV09a}). 

\begin{lemma}
\label{lem:SPExp}
For $(x,y)\in\C^n\times\C^m$ and $\lambda\in H_{n,m}$, we have
\begin{equation}
\label{SPreps}
SP_\lambda(x,y;q,t) = \sum_\mu (-q^{-1/2}t^{1/2})^{|\mu|} P_{\lambda/\mu^\prime}(x;q,t)Q_\mu(y;t,q)
\end{equation}
where the sum runs over all partitions $\mu$ such that
\begin{equation}
\label{restr} 
(\langle \lambda^\prime_1-n\rangle,\ldots,\langle \lambda^\prime_m-n\rangle)\subseteq \mu\subseteq (\lambda^\prime_1,\ldots,\lambda^\prime_m)  
\end{equation} 
where $\langle k\rangle:=\max(k,0)$. 
\end{lemma}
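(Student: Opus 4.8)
The plan is to compute $SP_\lambda=\varphi_{n,m;q,t}(P_\lambda)$ directly from the definitions \eqref{varphi}, \eqref{pnmr} by factorising the homomorphism $\varphi_{n,m;q,t}$ through the comultiplication of $\Lambda$. Writing $(x,y)$ for the disjoint union of an infinite $x$-alphabet with an infinite $y$-alphabet, the homomorphism property together with \eqref{pnmr} shows that $\varphi_{n,m;q,t}$ coincides with the operation of merging the two alphabets, specialising $x$ to $(x_1,\dots,x_n)$, and applying to the $y$-alphabet the linear substitution $\psi$ fixed by $p_r(y)\mapsto -q^{-r/2}t^{r/2}\tfrac{1-q^r}{1-t^r}p_r(y)$ before setting $y=(y_1,\dots,y_m)$. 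The key observation is that on power sums $\psi$ factorises as the Macdonald involution $\omega_{q,t}$ followed by the dilation $y\mapsto -q^{-1/2}t^{1/2}y$: the sign computation $(-1)^{r-1}(-q^{-1/2}t^{1/2})^r=-q^{-r/2}t^{r/2}$ reproduces, together with the factor $\tfrac{1-q^r}{1-t^r}$ supplied by $\omega_{q,t}$, precisely the coefficient in \eqref{pnmr}.

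First I would feed the addition formula \eqref{PPQ} for $P_\lambda$ in the merged alphabet into this factorisation. Applying $\varphi_{n,m;q,t}$ termwise leaves the $x$-factor as the skew Macdonald polynomial $P_{\lambda/\mu}(x_1,\dots,x_n;q,t)$ and runs the $y$-factor through $\psi$. Using the duality \eqref{duality} to trade $\omega_{q,t}$ for conjugation of the partition together with the swap $q\leftrightarrow t$, the relations \eqref{Q} and \eqref{blam} to pass between the $P$- and $Q$-normalisations, and the homogeneity \eqref{homQ} to extract the dilation scalar, the $y$-factor becomes $(-q^{-1/2}t^{1/2})^{|\mu|}Q_{\mu'}(y;t,q)$. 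Reindexing the sum by $\mu\mapsto\mu'$ and using $(\mu')'=\mu$ and $|\mu'|=|\mu|$ then produces exactly the expansion \eqref{SPreps}, at this stage summed over all partitions $\mu$.

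Finally I would determine which terms actually survive. The $x$-factor $P_{\lambda/\mu'}(x_1,\dots,x_n;q,t)$ is a skew Macdonald polynomial in $n$ variables, so it vanishes unless $\mu'\subseteq\lambda$ and every column of the skew diagram $\lambda/\mu'$ contains at most $n$ boxes; the latter reads $\lambda'_i-\mu_i\le n$, i.e. $\mu_i\ge\langle\lambda'_i-n\rangle$ for all $i$, while $\mu'\subseteq\lambda$ is equivalent to $\mu_i\le\lambda'_i$. The $y$-factor $Q_\mu(y_1,\dots,y_m;t,q)$ vanishes unless $\ell(\mu)\le m$. Intersecting these constraints gives $(\langle\lambda'_1-n\rangle,\dots,\langle\lambda'_m-n\rangle)\subseteq\mu\subseteq(\lambda'_1,\dots,\lambda'_m)$, which is \eqref{restr}; the compatibility of the lower and upper bounds beyond the $m$-th part forces $\lambda'_{m+1}\le n$, i.e. $\lambda\in H_{n,m}$ as in \eqref{Hnm}, so the sum is empty precisely when $SP_\lambda$ is known to vanish. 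I expect the second step to be the main obstacle: correctly combining the Macdonald involution with the dilation on the $y$-alphabet, and keeping track of the resulting sign, parameter swap and normalisation constant. The vanishing criterion for skew Macdonald polynomials in $n$ variables (no column longer than $n$) that enters the third step is standard but should be justified with care, e.g. through their semistandard-tableau expansion.
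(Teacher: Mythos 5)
Your proposal follows essentially the same route as the paper: feed the addition formula \eqref{PPQ} for the merged alphabet into the factorisation of $\varphi_{n,m;q,t}$, realise the $y$-substitution as the Macdonald involution $\omega_{q,t}$ composed with the dilation $y\mapsto -q^{-1/2}t^{1/2}y$ (the paper's automorphism $\sigma_{q,t}$), convert via \eqref{duality} and \eqref{homQ}, and cut the sum down using the vanishing of $Q_\mu$ in $m$ variables and of $P_{\lambda/\mu'}$ in $n$ variables when some column of $\lambda/\mu'$ exceeds $n$ boxes. All the computations check out (the paper justifies the skew-polynomial vanishing via Lemma~\ref{lem:f} rather than tableaux, but that is a cosmetic difference), so the proposal is correct and matches the paper's proof.
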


\begin{proof}
Working with infinite sets of variables $x=(x_1,x_2,\ldots)$ and $y=(y_1,y_2,\ldots)$, we infer from \eqref{PPQ} that  
$$
P_\lambda(x,y;q,t) = \sum_{\mu\subseteq\lambda^\prime}P_{\lambda/\mu^\prime}(x;q,t)P_{\mu^\prime}(y;q,t)
$$
(where we have taken $\mu\to\mu^\prime$ and used that $\mu^\prime\subseteq\lambda$ if and only if $\mu\subseteq\lambda^{\prime}$). 
Applying, with respect to $y$, the automorphism $\sigma_{q,t}: \Lambda\to\Lambda$ characterised by
$$
(\sigma_{q,t}(p_r))(y) = (\omega_{q,t}(p_r))(-q^{-1/2}t^{1/2}y) = -\frac{q^{r/2}-q^{-r/2}}{t^{r/2}-t^{-r/2}}p_r(y)\ \ \ (r\geq 1),
$$
and setting $x_i=0$ for $i>n$ and $y_j=0$ for $j>m$, it is clear from \eqref{pnmr}--\eqref{SPdef2} that $P_\lambda(x,y;q,t)$ is mapped to $SP_\lambda((x_1,\ldots,x_n),(y_1,\ldots,y_m);q,t)$, so that \eqref{duality} and \eqref{homQ} imply 
\begin{multline*}
SP_\lambda((x_1,\ldots,x_n),(y_1,\ldots,y_m);q,t)\\
= \sum_{\mu\subseteq\lambda^\prime}(-q^{-1/2}t^{1/2})^{|\mu|} P_{\lambda/\mu^\prime}((x_1,\ldots,x_n);q,t)Q_\mu((y_1,\ldots,y_m);t,q).
\end{multline*}
To justify the conditions in \eqref{restr}: (i) Note that $\mu\not\subseteq (\lambda^\prime_1,\ldots,\lambda^\prime_m)$ and $\mu\subseteq\lambda^\prime$ can be simultaneously satisfied only if $\mu_{m+1}\neq 0$, in which case $Q_\mu(y_1,\ldots,y_m;t,q)\equiv 0$, (ii) $(\langle \lambda^\prime_1-n\rangle,\ldots,\langle \lambda^\prime_m-n\rangle)\not\subseteq \mu$ is only possible if there exists $j=1,\ldots,m$ such that $\lambda^\prime_j-\mu_j>n$, but then $P_{\lambda/\mu^\prime}((x_1,\ldots,x_n);q,t)\equiv 0$ by Lemma~\ref{lem:f} (a detailed justification of the latter can be found in Appendix~\ref{app:detail}).  
\end{proof}

\section{Results}
\label{sec:results}
We now turn to our results. In Subsection \ref{sec:orth}, we introduce the relevant scalar product on the space $\Lambda_{n,m;q,t}$, spanned by the super-Macdonald polynomials, and state our main results in Theorem~\ref{thm}. The proof of this theorem is deferred to Section~\ref{sec:proof}. The Hilbert space interpretation of deformed Macdonald-Ruijsenaars operators and super-Macdonald polynomials, as provided by this scalar product, is discussed in Section~\ref{sec:hil}. 

In what follows, we use the short-hand notation
\begin{equation*} 
\label{domN} 
\dd\omega_n(x):=\frac{1}{(2\pi\ii)^n}\frac{\dd x_1}{x_1}\cdots \frac{\dd x_n}{x_n}
\end{equation*} 
for variables $x=(x_1,\ldots,x_n)\in\C^n$ and $n\in\Z_{\geq 1}$; we also recall the definition of the $n$-torus $\T_\xi^n$ of radius $\xi>0$ in \eqref{tori}.

\subsection{Orthogonality}
\label{sec:orth}
We let $L_{n,m}=\mathbb{C}[x_1^{\pm 1},\ldots,x_n^{\pm 1},y_1^{\pm 1},\ldots,y_m^{\pm 1}]$ be the algebra of complex Laurent polynomials in the variables $x_1,\ldots,x_n$ and $y_1,\ldots,y_m$. For $f\in L_{n,m}$, we define its conjugate $f^*$ by
\begin{equation}\label{conj}
f^*(x,y)=\overline{f(\bar{x}^{-1},\overline{y}^{-1})},
\end{equation}
where $\bar{x}^{-1}$ and $\bar{y}^{-1}$ are as in \eqref{barinv}. We recall that $\Lambda_{n,m;q,t}$ is the space of polynomials $P(x,y)$ in the variables $(x,y)=((x_1,\ldots,x_n),(y_1,\ldots,y_m))\in\C^n\times\C^m$ with complex coefficients satisfying the conditions in \eqref{Lamnm}--\eqref{qinv}.

As already described in the introduction, the Hermitian product of $P,Q\in\Lambda_{n,m;q,t}$ is obtained by integrating the product of $P(x,y)Q^*(x,y)$  with the weight function $\Delta_{n,m}(x,y;q,t)$ in \eqref{Deln}--\eqref{wghtFunc} over the $n+m$-dimensional torus $\T_\xi^n\times \T_{\xi^\prime}^m$ with suitable radii $\xi,\xi^\prime>0$; see \eqref{prod0S}. To see that we need to restrict the radii, we note that, while $P(x,y)Q^*(x,y)\in L_{n,m}$, and thus is holomorphic for $(x,y)\in(\mathbb{C}^*)^n\times(\mathbb{C}^*)^m$, the weight function $\Delta_{n,m}(x,y;q,t)$ is meromorphic with simple poles located along the hyperplanes
\begin{subequations} 
\label{poles} 
\begin{equation}
\label{xypls}
x_i = q^{\frac{\delta}{2}}t^{-\frac{\delta}{2}}y_j\ \ \ (i = 1,\ldots,n,\ j = 1,\ldots,m,\ \delta = \pm 1), 
\end{equation}
\begin{equation}
\label{xpls}
tq^kx_i = x_{i^\prime}\ \ \ (1\leq i\neq i^\prime\leq n,\ \ k\in\mathbb{Z}_{\geq 0}), 
\end{equation}
\begin{equation}
\label{ypls}
qt^ky_j = y_{j^\prime}\ \ \ (1\leq j\neq j^\prime\leq m,\ \ k\in\mathbb{Z}_{\geq 0}).
\end{equation}
\end{subequations} 
Clearly, $\T_\xi^n\times \T_{\xi^\prime}^m$ is contained in the complement of these hyperplanes provided the radii $\xi,\xi^\prime>0$ are constrained as follows: 
\begin{equation} 
\label{xixip} 
\xi/\xi^\prime < \min_{\delta=\pm 1}\big(q^{\frac{\delta}{2}}t^{-\frac{\delta}{2}}\big) \ \ \text{or}\ \ \xi/\xi^\prime > \max_{\delta=\pm 1}\big(q^{\frac{\delta}{2}}t^{-\frac{\delta}{2}}\big); 
\end{equation} 
if we restrict ourselves to such radii, we avoid all singularities of the integrand and thus obtain well-defined integrals; see Remark~\ref{remarkxi}.
Note that the condition in \eqref{xixip} can be written in a more compact way as follows, $|\log(\xi/\xi^\prime)|>\frac12|\log(q/t)|$.

\begin{definition}
\label{def:prod} 
For $\xi,\xi^\prime>0$ satisfying either of the two conditions in \eqref{xixip}, 
we define a sesquilinear form $\langle\cdot,\cdot\rangle_{n,m;q,t}^\prime$ on $\Lambda_{n,m;q,t}$ by
\begin{equation}
\label{prod}
\langle P,Q\rangle_{n,m;q,t}^\prime = \frac{1}{n!m!}\int_{\T_\xi^n}\dd\omega_n(x)\int_{\T_{\xi^\prime}^m}\dd\omega_m(y)\Delta_{n,m}(x,y;q,t)P(x,y)Q^*(x,y)
\end{equation}
for arbitrary $P,Q\in \Lambda_{n,m;q,t}$.
\end{definition}

Using that the integrand in \eqref{prod} is analytic everywhere except along the hyperplanes \eqref{poles}, it is not difficult to prove that this sequilinar form does not depend on $\xi,\xi^\prime$ as long as they vary over only one of the two regions in \eqref{xixip}; see Lemma~\ref{lem:indep}. This argument applies to any Laurent polynomials $P,Q\in L_{n,m}$, but it does not rule out the possibility that the value of $\langle P,Q\rangle_{n,m;q,t}^\prime$ in the former region $\xi/\xi'< \min_{\delta=\pm 1}\big(q^{\frac{\delta}{2}}t^{-\frac{\delta}{2}}\big)$ is different from that in the latter region $\xi/\xi'> \max_{\delta=\pm 1}\big(q^{\frac{\delta}{2}}t^{-\frac{\delta}{2}}\big)$. However, as we will show, if $P$ and $Q$ belong to $\Lambda_{n,m;q,t}$, then the value of $\langle P,Q\rangle_{n,m;q,t}^\prime$ is the same in both regions.

In order to appreciate the significance of the conditions  \eqref{Lamnm}--\eqref{qinv}, it is instructive to consider the simplest non-trivial case $n=m=1$, in which the above claim can be verified by direct computations; the interested reader can find the details in Appendix~\ref{app:nm1}. 

To state our main result in Theorem~\ref{thm} below, we need two mappings $e$ and $s$ on partitions. 
For that,  we observe that a partition $\lambda\in H_{n,m}$ such that $(m^n)\subseteq\lambda$ satisfies the conditions 
\begin{equation*} 
\lambda_n\geq m\geq \lambda_{n+1}
\end{equation*} 
and, for this reason, it can be written as 
\begin{equation*} 
\lambda=(\enm{\lambda}+(m^n),\snm{\lambda}')
\end{equation*} 
 with two partitions $\enm{\lambda}$ and $\snm{\lambda}$ of lengths less or equal to $n$ and $m$, respectively, 
and determined by $\lambda=(\lambda_1,\lambda_2,\ldots)$ as follows,  
\begin{equation} 
\label{es}
\begin{split} 
\enm{\lambda}&:=(\lambda_1-m,\lambda_2-m,\ldots,\lambda_n-m) = (\lambda_{m+1}^\prime,\lambda_{m+2}^\prime,\ldots)^\prime\\
\snm{\lambda}&:=(\lambda_{n+1},\lambda_{n+2},\ldots)^\prime = (\lambda_1^\prime-n,\ldots,\lambda_m^\prime-n); 
\end{split} 
\end{equation} 
see \cite[Section~2.2]{AHL19} for more details on these mappings $e$ (short for east) and $s$ (short for south), including the motivation for these names. 
To simplify notation, we write $e(\lambda)$ short for $\enm{\lambda}$ and $s(\lambda)$ short for $\snm{\lambda}$ if no confusion can arise. 

\begin{theorem} 
\label{thm}
(a) The sesquilinear form $\langle\cdot,\cdot\rangle_{n,m;q,t}^\prime$ from Definition \ref{def:prod} is Hermitian, i.e.
$$
\langle P,Q\rangle_{n,m;q,t}^\prime = \overline{\langle Q,P\rangle_{n,m;q,t}^\prime},\ \ \ (P,Q\in \Lambda_{n,m;q,t}),
$$
and independent of the integration radii $\xi,\xi^\prime>0$ provided \eqref{xixip} holds true.

(b) The super-Macdonald polynomials 
$SP_\lambda=SP_\lambda((x_1,\ldots,x_n),(y_1,\ldots,y_m);q,t)$, $\lambda\in H_{n,m}$,  satisfy the orthogonality relations
\begin{equation}
\label{orth}
\langle SP_\lambda,SP_\mu\rangle^\prime_{n,m;q,t} = \delta_{\lambda\mu}N_{n,m}(\lambda;q,t),
\end{equation}
with (quadratic) norms
\begin{equation}
\label{zNnm}
N_{n,m}(\lambda;q,t) = 0\quad \text{if}\ \  (m^n)\not\subseteq\lambda
\end{equation}
and
\begin{equation}
\label{nzNnm}
\begin{split}
N_{n,m}(\lambda;q,t) &= (t/q)^{|s(\lambda)|}\frac{b_{e(\lambda)}(q,t)b_{s(\lambda)}(t,q)}{b_\lambda(q,t)}\\
&\quad \cdot N_n(e(\lambda);q,t)N_{m}(s(\lambda);t,q)\ \ \text{if}\ \ (m^n)\subseteq\lambda, 
\end{split}
\end{equation}
cf.~\eqref{blam}, \eqref{Nn} and \eqref{es}. 
\end{theorem}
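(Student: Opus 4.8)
The plan is to prove the three assertions in the order (a) radius-independence together with Hermiticity, then (b) orthogonality, and finally the norm formula \eqref{nzNnm}, reducing everything to the ordinary Macdonald theory of Section~\ref{sec:MF} through the explicit expansion \eqref{SPreps}. Throughout I abbreviate $a:=q^{-1/2}t^{1/2}$, the parameter appearing in the coupling factor of \eqref{wghtFunc}.

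\emph{Part (a).} For $P,Q\in L_{n,m}$ the integrand $\Delta_{n,m}(x,y;q,t)P(x,y)Q^*(x,y)$ is a meromorphic form whose only singularities are the simple poles \eqref{poles}, none of which meets $\T_\xi^n\times\T_{\xi'}^m$ when \eqref{xixip} holds. Deforming one radius at a time and appealing to Cauchy's theorem then shows that the integral is constant on each of the two connected regions of radii cut out by \eqref{xixip}; this is the content of Lemma~\ref{lem:indep}. The one substantial point is that its value on the region $\xi/\xi'<\min_\delta(q^{\delta/2}t^{-\delta/2})$ coincides with its value on the opposite region. I would establish this by transporting $\xi/\xi'$ across the forbidden band and collecting the residues along the coupling hyperplanes \eqref{xypls}; the decisive fact is that, for $P,Q\in\Lambda_{n,m;q,t}$, the symmetry condition \eqref{qinv} relates the values of $P$ and $Q^*$ at the coupling pole with $\delta=+1$ to those at the pole with $\delta=-1$, so that the two residues cancel in pairs. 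With the form thereby well-defined for all admissible radii, Hermiticity follows from two elementary observations — that the coefficients of $\Delta_{n,m}$ are real (as $q,t$ are) and that $\Delta_{n,m}(x^{-1},y^{-1};q,t)=\Delta_{n,m}(x,y;q,t)$ — by substituting first $x\mapsto\bar x$, $y\mapsto\bar y$ and then $x\mapsto x^{-1}$, $y\mapsto y^{-1}$, each of which preserves the tori and the measure (the latter using radius-independence). This residue cancellation is the main obstacle in part (a).

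\emph{Orthogonality.} I would show that $\cM_{n,m;q,t}$ is symmetric with respect to $\langle\cdot,\cdot\rangle'_{n,m;q,t}$ on $\Lambda_{n,m;q,t}$. In $\langle\cM_{n,m;q,t}P,Q\rangle'_{n,m;q,t}$ each shift is stripped from $P$ by a substitution — $x_i\mapsto q^{-1}x_i$ for $T_{q,x_i}$ and $y_j\mapsto ty_j$ for $T_{t^{-1},y_j}$ — followed by a deformation of the affected contour back to its original radius. The standard functional equations of $\Delta_n$ and $\Delta_m$ under these shifts, together with the corresponding transformation of the coupling factor, convert the coefficients $A_i$ and $B_j$ into those acting on $Q^*$, yielding $\langle\cM_{n,m;q,t}P,Q\rangle'_{n,m;q,t}=\langle P,\cM_{n,m;q,t}Q\rangle'_{n,m;q,t}$; the residues produced when the shifted contours cross the coupling poles again cancel by \eqref{qinv}. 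Since the eigenvalues $d_\lambda(q,t)$ in \eqref{SPeigEq} are real and, as recalled in the introduction, separate the partitions $\lambda\in H_{n,m}$, the eigenfunction relation \eqref{SPeigEq} forces $\langle SP_\lambda,SP_\mu\rangle'_{n,m;q,t}=0$ for all $\lambda\neq\mu$.

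\emph{Norms.} To evaluate $\langle SP_\lambda,SP_\lambda\rangle'_{n,m;q,t}$ I would insert \eqref{SPreps} and exploit the partial factorization $\Delta_{n,m}=\Delta_n(x;q,t)\Delta_m(y;t,q)/\prod_{i,j}(1-ax_i/y_j)(1-ay_j/x_i)$. Because \eqref{xixip} keeps the radii separated, the coupling factor may be expanded as an absolutely convergent multiple geometric series, which decouples the $x$- and $y$-integrations; each is then carried out with the Macdonald torus orthogonality \eqref{orth0}, in the $x$-variables with parameters $(q,t)$ and norm $N_n$ and in the $y$-variables with the swapped parameters $(t,q)$ and norm $N_m$. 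The homogeneity relations \eqref{homP} and \eqref{homQ} of the skew and dual functions select the surviving terms, while the extremal structure constants furnished by Lemma~\ref{lem:f}, namely $f^{(\mu\cup\nu)'}_{\mu'\nu'}(t,q)=1$ and $f^{(\mu+\nu)'}_{\mu'\nu'}(t,q)=b_\mu(q,t)b_\nu(q,t)/b_{\mu+\nu}(q,t)$, assemble the ratio $b_{e(\lambda)}(q,t)b_{s(\lambda)}(t,q)/b_\lambda(q,t)$ and the prefactor $(t/q)^{|s(\lambda)|}$ of \eqref{nzNnm}. The crux of the argument is to show that this multiple sum collapses to the single product \eqref{nzNnm} and, separately, that a degree count in the $x$-integration leaves no diagonal term when $(m^n)\not\subseteq\lambda$, yielding the vanishing \eqref{zNnm}; I expect this step to require careful bookkeeping with skew Macdonald functions and a Cauchy-type summation, and it is the main obstacle of the proof.
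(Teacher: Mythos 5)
Your treatment of orthogonality and of the norms follows essentially the paper's route: self-adjointness of $\cM_{n,m;q,t}$ at fixed admissible radii plus separation of eigenvalues gives \eqref{orth} (the paper adds a continuity-in-$(q,t)$ argument to handle the non-generic parameters where $d_\lambda(q,t)=d_\mu(q,t)$, which you should not omit), and the norm computation via \eqref{SPreps}, homogeneity, the torus orthogonality \eqref{orth0} with parameters $(q,t)$ and $(t,q)$, and the extremal coefficients of Lemma~\ref{lem:f} is the paper's computation. The differences there are organisational: instead of expanding the coupling factor as a geometric series and tracking which terms survive, the paper exploits Lemma~\ref{lem:indep} to fix $\xi'=1$ and send $\xi\to\infty$, so that the coupling factor collapses to its leading monomial $\prod_{i,j}(-q^{-1/2}t^{1/2}x_i/y_j)$, only the two extreme terms $\mu_{\mathrm{min}},\mu_{\mathrm{max}}$ of \eqref{SPreps} contribute, and the degree count $|\mu_{\mathrm{max}}|-|\mu_{\mathrm{min}}|\leq nm$ yields \eqref{zNnm} at once; this eliminates most of the bookkeeping you flag as an obstacle. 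Likewise, in the self-adjointness step the paper first tightens the radii to \eqref{xixip1} so that no poles are crossed when the shift is transferred, rather than relying on residue cancellation there.

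The genuine gap is in part (a). You propose to prove that the two admissible regions in \eqref{xixip} give the same value by dragging the radii across the forbidden band and cancelling the residues along the coupling hyperplanes \eqref{xypls} using \eqref{qinv}. This is exactly what the paper does for $n=m=1$ in Appendix~\ref{app:nm1}, but for general $n,m$ the deformation produces iterated residues supported on intersections of several hyperplanes $x_i=q^{\pm1/2}t^{\mp1/2}y_j$, and it is far from clear that the single condition \eqref{qinv} for each pair $(i,j)$ controls these; you give no indication of how to carry this out, and the paper deliberately does not attempt it. Note also that your Hermiticity step secretly depends on this cross-region equality: the substitution $x\mapsto x^{-1}$ maps $\T_\xi$ to $\T_{1/\xi}$, so what the elementary substitutions actually give is $\langle P,Q\rangle'(\xi,\xi')=\overline{\langle Q,P\rangle'(1/\xi,1/\xi')}$ with the radii landing in the \emph{opposite} region (this is Lemma~\ref{lem:inv}). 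The paper circumvents the residue analysis entirely by reversing the logical order: it proves part (b) first within a single region, observes that the norms \eqref{zNnm}--\eqref{nzNnm} are real so that the form is Hermitian at fixed radii, and then combines Hermiticity with Lemma~\ref{lem:inv} to deduce $\langle P,Q\rangle'(\xi,\xi')=\langle P,Q\rangle'(1/\xi,1/\xi')$, which is precisely the two-region equality. You should either adopt that order or supply the missing multivariable residue cancellation.
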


\subsection{Hilbert space interpretation}
\label{sec:hil}

From Theorem \ref{thm}, we see that the kernel of the Hermitian product \eqref{prod} is spanned by the super-Macdonald polynomials with zero norm:
\begin{equation*}
\begin{split}
K_{n,m;q,t} &:= \ker\, \langle\cdot,\cdot\rangle_{n,m;q,t}^\prime\\
&= \spn\big\{SP_\lambda((x_1,\ldots,x_n),(y_1,\ldots,y_m);q,t)\mid (m^n)\not\subseteq\lambda\in H_{n,m}\big\}.
\end{split}
\end{equation*}
Since the remaining norms $N_{n,m}(\lambda;q,t)$, where $(m^n)\subseteq\lambda$, are positive, we have the following result.

\begin{proposition}
The Hermitian form $\langle\cdot,\cdot\rangle_{n,m;q,t}^\prime$ descends to a (positive definite) scalar product on the factor space
$$
V_{n,m;q,t} := \Lambda_{n,m;q,t}/K_{n,m;q,t},
$$
and the renormalised super-Macdonald polynomials
$$
\widetilde{SP}_\lambda(x,y;q,t) := N_{n,m}(\lambda;q,t)^{-1/2}SP_\lambda(x,y;q,t),\ \ \ (m^n)\subseteq\lambda,
$$
with $N_{n,m}(\lambda;q,t)$ given by \eqref{nzNnm}, yield an orthonormal basis in the resulting (pre-)Hilbert space.
\end{proposition}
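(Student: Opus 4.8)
The plan is to reduce the statement to elementary linear algebra built on top of Theorem~\ref{thm} together with the fact that $\{SP_\lambda \mid \lambda\in H_{n,m}\}$ is a vector space basis of $\Lambda_{n,m;q,t}$; the latter follows from \cite{SV09a}, or alternatively from the eigenvalue equation \eqref{SPeigEq}, since the eigenvalues $d_\lambda(q,t)$ separate the $SP_\lambda$ and thus render them linearly independent, while surjectivity of $\varphi_{n,m;q,t}$ gives spanning. First, I would record the kernel identification stated in the text preceding the proposition: writing a general $P\in\Lambda_{n,m;q,t}$ as $P=\sum_{\lambda\in H_{n,m}}c_\lambda SP_\lambda$ and using linearity in the first argument together with the orthogonality relations \eqref{orth}, one obtains $\langle P,SP_\mu\rangle_{n,m;q,t}^\prime=c_\mu N_{n,m}(\mu;q,t)$ for every $\mu\in H_{n,m}$. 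Since the $SP_\mu$ span $\Lambda_{n,m;q,t}$, the element $P$ lies in $K_{n,m;q,t}$ if and only if $c_\mu N_{n,m}(\mu;q,t)=0$ for all $\mu$; by \eqref{zNnm}--\eqref{nzNnm} this is equivalent to $c_\mu=0$ precisely for those $\mu$ with $(m^n)\subseteq\mu$, giving $K_{n,m;q,t}=\spn\{SP_\lambda\mid(m^n)\not\subseteq\lambda\in H_{n,m}\}$.

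Second, I would verify that $\langle\cdot,\cdot\rangle_{n,m;q,t}^\prime$ descends to a well-defined sesquilinear form on $V_{n,m;q,t}=\Lambda_{n,m;q,t}/K_{n,m;q,t}$. This is the standard observation that the value $\langle P,Q\rangle_{n,m;q,t}^\prime$ is unchanged when $P$ or $Q$ is altered by an element of the kernel: if $P^\prime-P,Q^\prime-Q\in K_{n,m;q,t}$, then $\langle P^\prime-P,Q^\prime\rangle_{n,m;q,t}^\prime=0$ directly from the kernel property, while $\langle P,Q^\prime-Q\rangle_{n,m;q,t}^\prime=\overline{\langle Q^\prime-Q,P\rangle_{n,m;q,t}^\prime}=0$ by the Hermiticity established in Theorem~\ref{thm}(a), so that $\langle P^\prime,Q^\prime\rangle_{n,m;q,t}^\prime=\langle P,Q\rangle_{n,m;q,t}^\prime$. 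Hence the induced form on $V_{n,m;q,t}$ is well-defined and remains Hermitian.

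Third, for positive definiteness and the orthonormal basis claim, I would note that the classes $\{[SP_\lambda]\mid(m^n)\subseteq\lambda\}$ form a basis of $V_{n,m;q,t}$, since they are the images of exactly those basis elements of $\Lambda_{n,m;q,t}$ complementary to the ones spanning $K_{n,m;q,t}$. Expanding $v=\sum_{(m^n)\subseteq\lambda}c_\lambda[SP_\lambda]$ and using \eqref{orth}, one finds $\langle v,v\rangle_{n,m;q,t}^\prime=\sum_{(m^n)\subseteq\lambda}|c_\lambda|^2N_{n,m}(\lambda;q,t)$, which is strictly positive whenever $v\neq 0$ because $N_{n,m}(\lambda;q,t)>0$ throughout this range by \eqref{nzNnm}. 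Finally, with $\widetilde{SP}_\lambda=N_{n,m}(\lambda;q,t)^{-1/2}SP_\lambda$, the relations \eqref{orth} give $\langle\widetilde{SP}_\lambda,\widetilde{SP}_\mu\rangle_{n,m;q,t}^\prime=\delta_{\lambda\mu}$ in a single line, so the classes of the $\widetilde{SP}_\lambda$ with $(m^n)\subseteq\lambda$ constitute an orthonormal basis of the resulting (pre-)Hilbert space.

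The argument is essentially formal once Theorem~\ref{thm} is in hand; the only genuinely analytic input --- the positivity of the norms $N_{n,m}(\lambda;q,t)$ for $(m^n)\subseteq\lambda$ --- is already supplied by part (b). I expect the sole point requiring any care to be the kernel identification, where one must invoke that the $SP_\lambda$, $\lambda\in H_{n,m}$, form a basis in order to pass from the vanishing of all pairings $\langle P,SP_\mu\rangle_{n,m;q,t}^\prime$ to membership of $P$ in $K_{n,m;q,t}$.
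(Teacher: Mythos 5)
Your proposal is correct and follows essentially the same route as the paper, which states this proposition as an immediate consequence of Theorem~\ref{thm} together with the kernel identification $K_{n,m;q,t}=\spn\{SP_\lambda\mid (m^n)\not\subseteq\lambda\in H_{n,m}\}$ displayed just before it; you have simply written out the standard linear algebra (descent of the form, positivity from $N_{n,m}(\lambda;q,t)>0$, orthonormality of the $\widetilde{SP}_\lambda$) that the paper leaves implicit. One small caveat: for the basis property of the $SP_\lambda$ you should rely on \cite{SV09a} rather than on separation of the eigenvalues $d_\lambda(q,t)$, since the latter can coincide for non-generic $(q,t)\in(0,1)^2$ (this is exactly why the paper's proof of Theorem~\ref{thm}(b) needs a continuity argument over the zero sets $Z_{\lambda\mu}$).
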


Moreover, since the deformed Macdonald-Ruijsenaars operators $\cM_{n,m;q,t}$ and $\cM_{n,m;q^{-1},t^{-1}}$ leave $K_{n,m;q,t}$ invariant and their eigenvalues are all real (cf.~\eqref{SPeigEq}--\eqref{inv}), they define (essentially) self-adjoint operators in $V_{n,m;q,t}$. Hence, we have assembled everything needed for a quantum mechanical interpretation of the model defined by $\cM_{n,m;q,t}$ and $\cM_{n,m;q^{-1},t^{-1}}$.

It is interesting to note that the subset $\{\lambda\in H_{n,m}\mid (m^n)\subset \lambda\}\subset H_{n,m}$, which labels super-Macdonald polynomials with nonzero norm, is in a simple one-to-one correspondence with the subset
$\{ (\mu,\nu)\in\cP\times\cP\mid \ell(\mu)\leq n,\ \ell(\nu)\leq m\}\subset \cP\times\cP$, given explicitly by
\begin{equation*} 
\lambda = ((m^n)+\mu,\nu') ,\ \ \  \mu=e(n,m;\lambda),\ \ \nu=s(n,m;\lambda),
\end{equation*} 
cf.~\eqref{es}. 
Moreover, while the physical interpretation of a partition $\lambda$ of arbitrary length is not clear, the partitions $\mu=(\mu_1,\ldots,\mu_n)$ and $\nu=(\nu_1,\ldots,\nu_m)$ have a natural physical interpretation as momentum quantum numbers:
The corresponding super-Macdonald polynomials
\begin{equation*} 
SP_{(\mu+(m^n),\nu')}(x,y;q,t) \ \ \ ((x,y)\in\C^n\times\C^m)
\end{equation*} 
yield an orthogonal basis in $V_{n,m;q,t}$, and it is natural to interpret them as wave functions describing $n$ and $m$ particles of two different kinds labeled by a pair $(\mu,\nu)$ of momentum quantum numbers.

From a physics point of view, it would be natural to express wave functions and operators in terms of the ``additive'' variables $(u,v)=(u_1,\ldots,u_n,v_1,\ldots,v_m)$ defined as follows,
\begin{equation}
\label{uv}
x_i = \ee^{2\pi\ii u_i/L},\quad y_j=\ee^{2\pi\ii v_j/L}\ \ \ (L>0) 
\end{equation} 
and parameters
\begin{equation}
\label{betagamma}
q = \ee^{-2\pi\beta/L},\quad t = \ee^{-2\pi\gamma/L}\ \ \ (\beta,\gamma>0),
\end{equation}
cf.~\cite{RS86,Rui87}.
Here $u_i\in[-L/2,L/2]$ and $v_i\in [-L/2,L/2]$ have the physical interpretation of position coordinates of particles. Indeed, in the original quantum field theoretic context of the Ruijsenaars model one is, eventually, interested in the limit $L\to\infty$, where space is the real line, but, to have a well-defined model, it is convenient to work with a circle of finite circumference $L$. 

Taking, for simplicity, parameters $r,a,b>0$, we recall from \cite[Section III.C \& Section V.C]{Rui97} the trigonometric Gamma function
$$
G(r,a;z) = \prod_{k=0}^\infty \big(1-\exp(2\ii r(z+\ii a k+\ii a/2))\big)^{-1}
$$
and weight function
\begin{equation}
\label{w}
w(r,a,b;z) = \frac{G(r,a;z+\ii b-\ii a/2)G(r,a;-z+\ii b-\ii a/2)}{G(r,a;z-\ii a/2)G(r,a;-z-\ii a/2)}.
\end{equation}
We note that $w(z)=w(r,a,b;z)$ is a (globally) meromorphic function with simple poles located at
$$
z = j\pi/r-\ii b-\ii a(2k-1)\ \ \ (j\in\mathbb{Z},\ k\in\mathbb{Z}_{\geq 1}),
$$
and zeros at
$$
z = j\pi/r-\ii a(2k-1)\ \ \ (j\in\mathbb{Z},\ k\in\mathbb{Z}_{\geq 1}).
$$
Due to the manifest complex conjugation property
\begin{equation*}
\overline{G(r,a;z)}=G(r,a;-\bar{z}),
\end{equation*}
it follows, in particular, that $w(z)$ is a regular and (strictly) positive function in $\mathbb{R}$. Assuming that $u\in(\mathbb{R}+\ii\epsilon)^n$ and $v\in(\mathbb{R}+\ii\epsilon^\prime)^m$ for some $\epsilon,\epsilon^\prime\in\R$, we can thus introduce the (formal) groundstate wave function
\begin{multline}
\label{Psi0}
\Psi_0(u,v;\beta,\gamma)\\
= \frac{\left(\prod_{1\leq i<j\leq n}w(\pi/L,\beta,\gamma;u_i-u_j)\right)^{1/2} \left(\prod_{1\leq i<j\leq m}w(\pi/L,\gamma,\beta;v_i-v_j)\right)^{1/2}}{\prod_{i=1}^n\prod_{j=1}^m 2\sin(\pi(u_i-v_j+\ii\gamma/2-\ii\beta/2)/L)},
\end{multline}
(where we take the positive square roots), and obtain a natural factorisation of the weight function $\Delta_{n,m}$ \eqref{wghtFunc}, as detailed in the following Lemma.

\begin{lemma}
\label{Lemma:fact}
For $(x,y)\in \T_\xi^n\times \T_{\xi^\prime}^m$, we have
\begin{equation*}
\Delta_{n,m}(x,y;q,t) = \exp(nm\pi(\gamma-\beta)/L)\Psi_0(u,v;\beta,\gamma)\overline{\Psi_0}(u,v;\beta,\gamma),
\end{equation*}
where $\overline{\Psi_0}(u,v;\beta,\gamma):=\overline{\Psi_0(\bar{u},\bar{v};\beta,\gamma)}$.
\end{lemma}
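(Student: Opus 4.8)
The plan is to reduce the claimed factorisation to a term-by-term identification, after translating the multiplicative data $x,y,q,t$ into the additive data $u,v,\beta,\gamma$ via \eqref{uv}--\eqref{betagamma}. First I would record the dictionary between the $q$-Pochhammer symbols in \eqref{Deln} and the trigonometric Gamma function. Writing $r=\pi/L$ and using $q=\ee^{-2\pi\beta/L}$, a direct expansion of the defining product for $G$ gives
$$
G(\pi/L,\beta;z) = \big(q^{1/2}\ee^{2\pi\ii z/L};q\big)_\infty^{-1},
$$
and likewise $G(\pi/L,\gamma;z)=\big(t^{1/2}\ee^{2\pi\ii z/L};t\big)_\infty^{-1}$. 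Tracking the half-integer and $\ii b$-shifts in \eqref{w}, for $z=u_i-u_j$ (so that $\ee^{2\pi\ii z/L}=x_i/x_j$) this yields
$$
w(\pi/L,\beta,\gamma;u_i-u_j) = \frac{(x_i/x_j;q)_\infty\,(x_j/x_i;q)_\infty}{(tx_i/x_j;q)_\infty\,(tx_j/x_i;q)_\infty},
$$
with the analogous identity for $w(\pi/L,\gamma,\beta;v_i-v_j)$ obtained by interchanging the roles of $(q,\beta)$ and $(t,\gamma)$.

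Next I would assemble the three blocks of \eqref{wghtFunc}. Splitting the product over $i\neq j$ in \eqref{Deln} into pairs $i<j$ gives at once
$$
\Delta_n(x;q,t) = \prod_{1\leq i<j\leq n} w(\pi/L,\beta,\gamma;u_i-u_j),\qquad \Delta_m(y;t,q) = \prod_{1\leq i<j\leq m} w(\pi/L,\gamma,\beta;v_i-v_j),
$$
reproducing the two $w$-products in the numerator of \eqref{Psi0}. For the cross term I would write $q^{-1/2}t^{1/2}x_i/y_j = \ee^{2\pi\ii\zeta_{ij}/L}$ with $\zeta_{ij}=u_i-v_j+\ii(\gamma-\beta)/2$, and $q^{-1/2}t^{1/2}y_j/x_i=\ee^{2\pi\ii\eta_{ij}/L}$ with $\eta_{ij}=-(u_i-v_j)+\ii(\gamma-\beta)/2$. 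Using $1-\ee^{2\pi\ii\zeta/L}=-2\ii\,\ee^{\pi\ii\zeta/L}\sin(\pi\zeta/L)$ and $\zeta_{ij}+\eta_{ij}=\ii(\gamma-\beta)$, each factor collapses to one $\sin(\pi(u_i-v_j+\ii(\gamma-\beta)/2)/L)$, one $\sin(\pi(u_i-v_j-\ii(\gamma-\beta)/2)/L)$, and a prefactor $\ee^{-\pi(\gamma-\beta)/L}$ per pair; since these factors sit in the \emph{denominator} of \eqref{wghtFunc}, the $nm$ of them combine into the global factor $\exp(nm\pi(\gamma-\beta)/L)$ in the statement.

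Finally I would reconcile the square roots in \eqref{Psi0} with the full products above. Since $(x,y)\in\T_\xi^n\times\T_{\xi^\prime}^m$ forces all $u_i$ to share one imaginary part and all $v_j$ another, the differences $u_i-u_j$ and $v_i-v_j$ are real; by the stated positivity of $w$ on $\R$, the products $\prod_{i<j}w$ are positive reals, so the square roots in \eqref{Psi0} are unambiguous. Deriving $\overline{w(r,a,b;z)}=w(r,a,b;\bar z)$ from the property $\overline{G(r,a;z)}=G(r,a;-\bar z)$, I would conclude that in $\Psi_0\overline{\Psi_0}$ each square root pairs with its conjugate partner to give a full $w$-product, so the numerator reproduces $\Delta_n(x;q,t)\Delta_m(y;t,q)$, while the denominator of $\Psi_0$ times its conjugate reproduces the $\sin\times\sin$ factors of the cross term. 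Matching the pieces gives the identity.

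I expect the main obstacle to be purely computational: keeping the numerous half-integer shifts $\pm\ii a/2$, the $\ii b$-shifts in \eqref{w}, and the $\sin$ sign conventions straight, and in particular checking that the accumulated exponential prefactors combine into the single clean factor $\exp(nm\pi(\gamma-\beta)/L)$ rather than leaving spurious phases. The one genuinely structural point, rather than bookkeeping, is the observation that the differences of additive variables are real on the relevant torus, which is what makes the square roots in $\Psi_0$ well defined and lets $\Psi_0\overline{\Psi_0}$ collapse to the square-root-free products $\Delta_n\Delta_m$.
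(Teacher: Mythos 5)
Your proposal is correct and follows essentially the same route as the paper's proof: translating the $q$-Pochhammer ratios into trigonometric Gamma functions, rewriting each cross-term pair as $4\ee^{\pi(\beta-\gamma)/L}$ times a product of two sines, and assembling. The only point you make explicit that the paper leaves implicit is that $u_i-u_j$ and $v_i-v_j$ are real on $\T_\xi^n\times\T_{\xi'}^m$, so the square roots in $\Psi_0$ pair with their conjugates to recover $\Delta_n\Delta_m$; this is a worthwhile clarification, not a deviation.
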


\begin{proof}
For $x$ and $z$ complex variables related as
$$
x=\ee^{2\pi \ii z/L},
$$
we use \eqref{betagamma} to deduce
\begin{equation*}
\begin{split}
\frac{(x;q)_\infty}{(tx;q)_\infty} &= \prod_{k=0}^\infty \frac{1-\exp\big(\frac{2\pi \ii}{L}(z+\ii\beta k)\big)}{1-\exp\big(\frac{2\pi \ii}{L}(z+\ii\gamma+\ii\beta k)\big)}\\
&= \frac{G(\pi/L,\beta;z+\ii\gamma-\ii\beta/2)}{G(\pi/L,\beta;z-\ii\beta/2)}
\end{split}
\end{equation*}
and
\begin{multline*}
(1-q^{-1/2}t^{1/2}x)(1-q^{-1/2}t^{1/2}x^{-1})\\
= 4\ee^{\pi(\beta-\gamma)/L}\sin(\pi(z+\ii\gamma/2-\ii\beta/2)/L)\sin(\pi(z-\ii\gamma/2+\ii\beta/2)/L).
\end{multline*}
From \eqref{wghtFunc} and \eqref{w}--\eqref{Psi0}, the statement can now be inferred by a straightforward computation.
\end{proof}

If we consider wave functions of the form
\begin{equation*}  
\Psi^{(P)}(u,v;\beta,\gamma)  : = P(x,y)\Psi_0(u,v;\beta,\gamma) \quad (u=(u_1,\ldots,u_n),\, v=(v_1,\ldots,v_m))
\end{equation*} 
with $P\in\Lambda_{n,m;q,t}$, then we can use Lemma \ref{Lemma:fact} to rewrite our Hermitian form as a suitably regularised version of a conventional Hilbert space product for a quantum mechanical model describing particles moving on the circle $[-L/2,L/2]$. 

\begin{proposition}
Let $\epsilon,\epsilon^\prime\in\mathbb{R}$ be such that $|\epsilon-\epsilon^\prime|>|\gamma-\beta|/2$. Then we have
\begin{multline}
\label{altProd}
\int_{([-L/2,L/2]+\ii\epsilon)^n}du\int_{([-L/2,L/2]+\ii\epsilon^\prime)^m}dv\, \Psi^{(P)}(u,v;\beta,\gamma)\overline{\Psi^{(Q)}}(u,v;\beta,\gamma)\\
= n!m! L^{n+m}\exp(nm\pi(\beta-\gamma)/L)\langle P,Q\rangle_{n,m;q,t}^\prime
\end{multline}
for all $P,Q\in\Lambda_{n,m;q,t}$, where $\overline{\Psi^{(Q)}}(u,v;\beta,\gamma)=\overline{\Psi^{(Q)}(\bar{u},\bar{v};\beta,\gamma)}$.
\end{proposition}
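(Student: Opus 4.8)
The plan is to reduce the left-hand side of \eqref{altProd} to the contour-integral form \eqref{prod} by a single change of variables from the additive coordinates $(u,v)$ to the multiplicative ones $(x,y)$, after which the factorisation in Lemma~\ref{Lemma:fact} converts the product $\Psi_0\overline{\Psi_0}$ into the weight $\Delta_{n,m}$. Concretely, under $x_i=\ee^{2\pi\ii u_i/L}$ and $y_j=\ee^{2\pi\ii v_j/L}$, as $u_i$ runs over $[-L/2,L/2]+\ii\epsilon$ the point $x_i$ traverses the circle $|x_i|=\ee^{-2\pi\epsilon/L}=:\xi$ once (counterclockwise), and similarly $y_j$ traverses $|y_j|=\ee^{-2\pi\epsilon^\prime/L}=:\xi^\prime$; thus the domain $([-L/2,L/2]+\ii\epsilon)^n\times([-L/2,L/2]+\ii\epsilon^\prime)^m$ maps bijectively, with matching orientation, onto $\T_\xi^n\times\T_{\xi^\prime}^m$.

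First I would record the measure. From $\dd x_i=(2\pi\ii/L)x_i\,\dd u_i$ one gets $\dd u_i=L\,\dd x_i/(2\pi\ii x_i)$, so that $\dd u=L^n\dd\omega_n(x)$ and $\dd v=L^m\dd\omega_m(y)$, producing the overall factor $L^{n+m}$. Next I would check that the hypothesis $|\epsilon-\epsilon^\prime|>|\gamma-\beta|/2$ is exactly the admissibility condition \eqref{xixip}: since $\log(\xi/\xi^\prime)=-2\pi(\epsilon-\epsilon^\prime)/L$ and $\log(q/t)=2\pi(\gamma-\beta)/L$ by \eqref{betagamma}, the stated bound is equivalent to $|\log(\xi/\xi^\prime)|>\tfrac12|\log(q/t)|$. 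Hence $(x,y)$ stays in one of the two admissible regions of \eqref{xixip}, the integrand avoids the singularities \eqref{poles}, and the resulting contour integral computes $\langle P,Q\rangle_{n,m;q,t}^\prime$ (the value being the same in either region, for $P,Q\in\Lambda_{n,m;q,t}$, by Theorem~\ref{thm}(a)).

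The one point requiring care is the conjugated factor $\overline{\Psi^{(Q)}}$. Writing $\Psi^{(Q)}(\bar u,\bar v)=Q(\tilde x,\tilde y)\Psi_0(\bar u,\bar v)$ with $\tilde x_i=\ee^{2\pi\ii\bar u_i/L}$, I would note that $\overline{\tilde x_i}=\ee^{-2\pi\ii u_i/L}=1/x_i$, so $\tilde x=\bar x^{-1}$ and likewise $\tilde y=\bar y^{-1}$. Therefore $\overline{\Psi^{(Q)}}(u,v)=\overline{Q(\bar x^{-1},\bar y^{-1})}\,\overline{\Psi_0}(u,v)=Q^*(x,y)\,\overline{\Psi_0}(u,v)$ by the definition \eqref{conj} and $\overline{\Psi_0}(u,v):=\overline{\Psi_0(\bar u,\bar v)}$. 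Consequently the integrand equals $P(x,y)Q^*(x,y)\,\Psi_0\overline{\Psi_0}$, and Lemma~\ref{Lemma:fact} replaces $\Psi_0\overline{\Psi_0}$ by $\exp(nm\pi(\beta-\gamma)/L)\Delta_{n,m}(x,y;q,t)$.

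Finally I would assemble the constants: the change of variables contributes $L^{n+m}$, Lemma~\ref{Lemma:fact} contributes $\exp(nm\pi(\beta-\gamma)/L)$, and comparison with Definition~\ref{def:prod} supplies the factor $n!m!$, so that the integral becomes $n!m!\,L^{n+m}\exp(nm\pi(\beta-\gamma)/L)\langle P,Q\rangle_{n,m;q,t}^\prime$, which is precisely the right-hand side of \eqref{altProd}. I do not expect a genuine obstacle here; the only things to watch are the bookkeeping of the complex conjugation that produces $Q^*$ rather than $\overline{Q}$, and the sign in the exponential prefactor, both of which are settled by the identities above.
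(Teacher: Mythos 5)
Your proposal is correct and follows essentially the same route as the paper's (much terser) proof: set $\xi=\ee^{-2\pi\epsilon/L}$, $\xi^\prime=\ee^{-2\pi\epsilon^\prime/L}$, observe that \eqref{xixip} becomes $|\epsilon-\epsilon^\prime|>|\gamma-\beta|/2$, change variables via \eqref{uv}--\eqref{betagamma}, and invoke Lemma~\ref{Lemma:fact}. Your careful bookkeeping of the Jacobian, the conjugation producing $Q^*$, and the exponential prefactor is exactly the detail the paper leaves implicit.
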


\begin{proof}
Taking
$$
\xi = \ee^{-2\pi\epsilon/L},\ \ \ \xi^\prime = \ee^{-2\pi\epsilon^\prime/L},
$$
it is readily seen that the constraint \eqref{xixip} is equivalent to $|\epsilon-\epsilon^\prime|>|\gamma-\beta|/2$. Changing variables according to \eqref{uv}--\eqref{betagamma} in \eqref{prod} and invoking Lemma \ref{Lemma:fact}, the equality \eqref{altProd} results.
\end{proof}

\begin{remark} 
Note that, from a physics point of view, the positions $u_i$ and $v_j$ are real, but one has to continue the arguments of the super-Macdonald polynomials to the complex plane in order to compute their scalar product. This bears some resemblance to the fact that an eigenfunction of Ruijsenaars' (analytic) difference operators needs to have sufficient analyticity in order for the corresponding eigenvalue equations to make sense, see e.g.~\cite{Rui01}.
\end{remark}

As we demonstrate in Appendix \ref{app:Id}, the terms in the deformed Macdonald--Ruijsenaars operator $\cM_{n,m;q,t}$ in \eqref{cMnm1}--\eqref{cMnm2} not involving a shift operator add upp to a constant. Dropping this overall constant, we get the operator
\begin{equation}
\label{Mnm} 
M_{n,m;q,t} :=  \frac{t^{1-n}}{1-q} \sum_{i=1}^n A_i(x,y;q,t)T_{q,x_i}
+ \frac{q^{m-1}}{1-t^{-1}} \sum_{j=1}^m B_j(x,y;q,t)T_{t^{-1},y_j}.
\end{equation}
Changing variables and parameters according to \eqref{uv} and performing a similarity transformation with $\Psi_0$, a direct computation, using the difference equation
$$
\frac{G(z+\ii a/2)}{G(z-\ii a/2)}=1-\ee^{2\ii rz},
$$
satisfied by the trigonometric Gamma function (cf.~\cite[Section III.C]{Rui97}), yields
\begin{multline*}
R_{n,m;\beta,\gamma}^- := \ee^{(m-1)\pi\beta/L}\ee^{(1-n)\pi\gamma/L}\Psi_{0} M_{n,m;q,t} \Psi_{0}^{-1}\\
= \frac{1}{2\sinh(\pi\beta/L)}\sum_{i=1}^n \prod_{i^\prime\neq i}^n f_+(u_i-u_{i^\prime};\gamma)\cdot \ee^{\ii\beta\partial_{u_i}}\cdot \prod_{i^\prime\neq i}^n f_-(u_i-u_{i^\prime};\gamma)\\
 -\frac{1}{2\sinh(\pi\gamma/L)}\sum_{j=1}^m \prod_{i=1}^n \frac{\sin(\pi(u_i-v_j-\ii\gamma/2+\ii\beta/2)/L)}{\sin(\pi(u_i-v_j+\ii\gamma/2+\ii\beta/2)/L)}\\
 \cdot \prod_{j^\prime\neq j}^m f_-(v_j-v_{j^\prime};\beta)\cdot \ee^{-\ii\gamma\partial_{v_j}}
 \cdot \prod_{j^\prime\neq j}^m f_+(v_j-v_{j^\prime};\beta)\\
 \cdot \prod_{i=1}^n \frac{\sin(\pi(u_i-v_j+\ii\gamma/2-\ii\beta/2)/L)}{\sin(\pi(u_i-v_j-\ii\gamma/2-\ii\beta/2)/L)},
\end{multline*}
with
$$
f_\pm(z;\gamma) = \left(\frac{\sin(\pi(z\pm \ii\gamma)/L)}{\sin(\pi z/L)}\right)^{1/2},
$$
and where $\partial_{u_i}=\partial/\partial u_i$ and $\partial_{v_j}=\partial/\partial v_j$.

The structure of these operators occupies a sort of middle-ground between the trigonometric degeneration of Ruijsenaars' original (undeformed elliptic) operator ${\hat S}_{-1}$ and a similarity transform $\mathcal{A}_{-1}=U^{-1/2}{\hat S}_{-1}U^{1/2}$ with a trigonometric `scattering function' $U$. (Explicit expressions for the latter operator and the pertinent scattering function (in the hyperbolic case) can, e.g., be found in~\cite{HR14}.) In particular, when $m=0$ we recover the trigonometric instance of ${\hat S}_{-1}$.

Moreover, as shown in Appendix~\ref{app:relativistic}, the operators $M_{n,m;q,t}$ and $M_{n,m;q^{-1},t^{-1}}$, or equivalently $R_{n,m;\beta,\gamma}^-$ and $R_{n,m;\beta,\gamma}^+:=\ee^{(1-m)\pi\beta/L}\ee^{(n-1)\pi\gamma/L}\Psi_{0} M_{n,m;q^{-1},t^{-1}} \Psi_{0}^{-1}$, define a model that is relativistically invariant, for arbitrary particle numbers $n$ and $m$.

\section{Proofs}
\label{sec:proof}
This section is devoted to the proof of Theorem~\ref{thm}. In place of \eqref{prod}, we write
\label{Sec:Proofs}
\begin{multline}
\label{prod1}
\langle P,Q\rangle_{n,m;q,t}^\prime(\xi,\xi^\prime)\\
= \frac{1}{n!m!}\int_{\T_\xi^n}d\omega_n(x)\int_{\T_{\xi^\prime}^m}d\omega_m(y)\Delta_{n,m}(x,y;q,t)P(x,y)Q^*(x,y)
\end{multline}
for $P,Q\in \Lambda_{n,m;q,t}$, so that we easily can keep track of the choice of integration radii $\xi,\xi^\prime>0$.
Introducing the maximum function
$$
M(q,t) := \max_{\delta=\pm 1}\big(q^{\frac{\delta}{2}}t^{-\frac{\delta}{2}}\big),
$$
we note that the conditions \eqref{xixip} can be expressed as
\begin{equation}
\label{xixipAlt}
\xi/\xi^\prime > M(q,t) \ \ \text{or}\ \ \xi/\xi^\prime < 1/M(q,t).
\end{equation}

In Section~\ref{sec:proof1}, we prove preliminary results on the sesquilinear form given by \eqref{prod1}.
Based on this, we prove Theorem~\ref{thm} in Section~\ref{sec:proof2}.  

\subsection{Preliminary results}
\label{sec:proof1}
First, we establish a simple transformation property of the form \eqref{prod1} under the inversion $(\xi,\xi^\prime)\to(1/\xi,1/\xi^\prime)$ of integration radii.
 
\begin{lemma}
\label{lem:inv}  
For all $P,Q\in\Lambda_{n,m;q,t}$, we have
\begin{equation}
\label{prodExpr}
\langle P,Q\rangle_{n,m;q,t}^\prime(\xi,\xi^\prime) = \overline{ \langle Q,P\rangle_{n,m;q,t}^\prime(1/\xi,1/\xi^\prime) }. 
\end{equation} 
\end{lemma}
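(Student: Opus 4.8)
The plan is to start from the right-hand side of \eqref{prodExpr}, pull the complex conjugation inside the integral, and then undo the inversion of the radii by the (real-analytic) change of variables $x\mapsto\bar x^{-1}$, $y\mapsto\bar y^{-1}$. First I would parametrize the tori appearing in $\langle Q,P\rangle^\prime_{n,m;q,t}(1/\xi,1/\xi^\prime)$ by writing $x_i=\xi^{-1}\ee^{\ii\theta_i}$ and $y_j=(\xi^\prime)^{-1}\ee^{\ii\phi_j}$. Then $\dd\omega_n(x)$ and $\dd\omega_m(y)$ reduce to the normalised angular measures $\prod_i\frac{\dd\theta_i}{2\pi}$ and $\prod_j\frac{\dd\phi_j}{2\pi}$, which are real and radius-independent; hence conjugating the whole integral amounts to conjugating the integrand pointwise, leaving an integral of $\overline{\Delta_{n,m}(x,y;q,t)}\,\overline{Q(x,y)}\,\overline{P^*(x,y)}$.

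The key step is to re-express each conjugated factor in terms of the inverted variables $\tilde x:=\bar x^{-1}$ and $\tilde y:=\bar y^{-1}$, which by the above parametrisation satisfy $\tilde x_i=\xi\ee^{\ii\theta_i}\in\T_\xi$ and $\tilde y_j=\xi^\prime\ee^{\ii\phi_j}\in\T_{\xi^\prime}$. From the definition \eqref{conj} one gets $\overline{P^*(x,y)}=P(\tilde x,\tilde y)$ immediately, while $\bar{\tilde x}^{-1}=x$ and $\bar{\tilde y}^{-1}=y$ give $Q^*(\tilde x,\tilde y)=\overline{Q(x,y)}$. For the weight, since $0<q,t<1$ are real we have $\overline{(a;q)_\infty}=(\bar a;q)_\infty$, so conjugating \eqref{Deln} and \eqref{wghtFunc} factor by factor and relabelling $i\leftrightarrow j$ in the products over ordered pairs yields the crucial identity $\overline{\Delta_{n,m}(x,y;q,t)}=\Delta_{n,m}(\tilde x,\tilde y;q,t)$.

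Substituting these three identities turns the conjugated right-hand side into $\frac{1}{n!m!}\int\prod_i\frac{\dd\theta_i}{2\pi}\prod_j\frac{\dd\phi_j}{2\pi}\,\Delta_{n,m}(\tilde x,\tilde y;q,t)\,P(\tilde x,\tilde y)\,Q^*(\tilde x,\tilde y)$. Since $\tilde x$ now ranges over $\T_\xi^n$ and $\tilde y$ over $\T_{\xi^\prime}^m$ with precisely the angular measures $\dd\omega_n(\tilde x)$ and $\dd\omega_m(\tilde y)$, this expression is exactly $\langle P,Q\rangle^\prime_{n,m;q,t}(\xi,\xi^\prime)$ in the form \eqref{prod1}, which establishes \eqref{prodExpr}.

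I expect the only genuinely delicate point to be the weight identity $\overline{\Delta_{n,m}(x,y;q,t)}=\Delta_{n,m}(\bar x^{-1},\bar y^{-1};q,t)$: one must track carefully that $\bar x_i/\bar x_j=\tilde x_j/\tilde x_i$ interacts with the product over ordered pairs $i\neq j$ via the relabelling $i\leftrightarrow j$, and likewise that the two denominator factors built from $x_i/y_j$ and $y_j/x_i$ get interchanged under conjugation-plus-inversion, so that reality of $q$ and $t$ (hence of $q^{-1/2}t^{1/2}$) is exactly what makes the computation close. Everything else is routine bookkeeping with the angular parametrisation.
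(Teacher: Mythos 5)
Your argument is correct and is essentially the paper's own proof run in the opposite direction: the paper starts from the left-hand side and uses the inversion invariance $\Delta_{n,m}(x,y)=\Delta_{n,m}(x^{-1},y^{-1})$ together with the reality of $q,t$, which is exactly the content of your identity $\overline{\Delta_{n,m}(x,y;q,t)}=\Delta_{n,m}(\bar x^{-1},\bar y^{-1};q,t)$. The angular parametrisation and the bookkeeping for $P$, $Q^*$ and the ordered-pair products all match the paper's computation, so nothing further is needed.
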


\begin{proof}
We find it convenient to work with the ``additive'' variables given by \eqref{uv} with $L=2\pi$. In order to avoid cumbersome and lengthy formulas, we suppress the parameters $q$, $t$ and use the short-hand notation
$$
\exp(\ii u) := (\ee^{\ii u_1},\ldots,\ee^{\ii u_n}),\ \ \ \exp(\ii v) := (\ee^{\ii v_1},\ldots,\ee^{\ii v_m}).
$$
Observing that
$$
\Delta_{n,m}(x,y) = \Delta_{n,m}(x^{-1},y^{-1}),
$$
we thus rewrite the left-hand side of \eqref{prodExpr} as
\begin{multline*}
\frac{1}{(2\pi)^{n+m}}\int_{[-\pi,\pi]^n}du \int_{[-\pi,\pi]^m}dv\, \overline{\Delta_{n,m}(\xi^{-1}\exp(\ii u),(\xi^\prime)^{-1}\exp(\ii v))}\\
\cdot P(\xi\exp(\ii u),\xi^\prime\exp(\ii v))\overline{Q(\xi^{-1}\exp(\ii u),(\xi^\prime)^{-1}\exp(\ii v))}.
\end{multline*}
Finally, using the observation
$$
P(\xi\exp(\ii u),\xi^\prime\exp(\ii v)) = \overline{P^*(\xi^{-1}\exp(\ii u),(\xi^\prime)^{-1}\exp(\ii v))},
$$
we see that this integral is equal to the right-hand side of \eqref{prodExpr}.
\end{proof} 

We proceed to show that \eqref{prod1} is invariant under continuous deformations of the integration radii as long as they satisfy \eqref{xixipAlt}.

\begin{lemma}
\label{lem:indep}
For any $P,Q\in\Lambda_{n,m;q,t}$, the value of $\langle P,Q\rangle^\prime_{n,m;q,t}(\xi,\xi^\prime)$ does not change as $\xi,\xi^\prime>0$ vary within one of the two regions  $\xi/\xi^\prime>M(q,t)$ and $\xi/\xi^\prime<1/M(q,t)$.
\end{lemma}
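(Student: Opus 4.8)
The plan is to read the integral \eqref{prod1} as the period of a closed holomorphic top-form and to invoke a multivariable contour-deformation (Cauchy/Stokes) argument. Set
$\Omega := \Delta_{n,m}(x,y;q,t)\,P(x,y)\,Q^*(x,y)\,\dd\omega_n(x)\,\dd\omega_m(y)$,
regarded as an $(n+m,0)$-form on the open set $U\subset(\C^*)^n\times(\C^*)^m$ obtained by deleting the polar hyperplanes \eqref{poles}. Since $PQ^*\in L_{n,m}$ is holomorphic on $(\C^*)^n\times(\C^*)^m$ and $\Delta_{n,m}$ is holomorphic on $U$, the coefficient of $\Omega$ is holomorphic on $U$; as $\Omega$ already has maximal holomorphic degree, $\bar\partial\Omega=0=\partial\Omega$, so $\dd\Omega=0$ on $U$. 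Thus $\langle P,Q\rangle^\prime_{n,m;q,t}(\xi,\xi^\prime)=\frac{1}{n!m!}\int_{\T_\xi^n\times\T_{\xi^\prime}^m}\Omega$ depends only on the homology class of the cycle $\T_\xi^n\times\T_{\xi^\prime}^m$ in $U$. It therefore suffices to show that, as $(\xi,\xi^\prime)$ varies within one of the regions in \eqref{xixipAlt}, these product tori stay disjoint from the poles and are mutually homologous. (Note that, as the surrounding text remarks, the conditions \eqref{Lamnm}--\eqref{qinv} are not used here: the argument works for arbitrary $P,Q\in L_{n,m}$.)

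The crux is the elementary bookkeeping of pole avoidance on a product of round tori. On $\T_\xi^n\times\T_{\xi^\prime}^m$ one has $|x_i|=\xi$ and $|y_j|=\xi^\prime$ for all $i,j$. The poles \eqref{xpls} require $|x_{i^\prime}/x_i|=tq^k$ with $i\neq i^\prime$, forcing $\xi=tq^k\xi$; since $0<tq^k\leq t<1$ for all $k\in\Z_{\geq 0}$, this never holds, so these poles are avoided for every $\xi$. The same reasoning, using $0<qt^k\leq q<1$, rules out the poles \eqref{ypls} for every $\xi^\prime$. Finally, the poles \eqref{xypls} meet the tori precisely when $\xi/\xi^\prime=q^{\delta/2}t^{-\delta/2}$ for some $\delta=\pm1$, i.e. exactly when $\xi/\xi^\prime\in\{M(q,t),1/M(q,t)\}$; these two reciprocal values are the boundaries of the two regions in \eqref{xixipAlt} and hence are excluded throughout either open region. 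Consequently $\T_\xi^n\times\T_{\xi^\prime}^m\subset U$ for all $(\xi,\xi^\prime)$ in a single region.

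To finish, observe that each region $\{\xi/\xi^\prime>M(q,t)\}$ and $\{\xi/\xi^\prime<1/M(q,t)\}$ is a convex cone in the $(\xi,\xi^\prime)$-plane, hence connected, so any two admissible pairs $(\xi,\xi^\prime)$, $(\xi_1,\xi_1^\prime)$ in the same region can be joined by a straight-line path $s\mapsto(\xi(s),\xi^\prime(s))$, $s\in[0,1]$, lying in the region. The union $C=\bigcup_{s\in[0,1]}\T_{\xi(s)}^n\times\T_{\xi^\prime(s)}^m$, parametrised by $s$ and the angular coordinates of the tori, is a smooth $(n+m+1)$-chain contained in $U$ by the previous paragraph, with boundary $\partial C=\T_{\xi_1}^n\times\T_{\xi_1^\prime}^m-\T_\xi^n\times\T_{\xi^\prime}^m$ (the periodic angular directions contributing no boundary). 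Stokes' theorem together with $\dd\Omega=0$ then gives $\int_{\T_{\xi_1}^n\times\T_{\xi_1^\prime}^m}\Omega=\int_{\T_\xi^n\times\T_{\xi^\prime}^m}\Omega$, which is the claim. The one subtlety to guard against—and the main obstacle to a naive argument—is the temptation to deform a single variable's radius at a time: moving one $x_i$ off the common radius $\xi$ would generically cross an $x$--$x$ pole \eqref{xpls}, so it is essential that the entire torus be scaled coherently, as above.
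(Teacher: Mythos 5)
Your argument is correct: the form $\Omega$ is indeed closed on the complement $U$ of the polar hyperplanes, the product tori stay inside $U$ throughout either region of \eqref{xixipAlt}, and the Stokes argument on the $(n+m+1)$-chain swept out by coherently scaling the whole torus along a straight line in the (convex, hence connected) region is a legitimate way to conclude. The paper reaches the same conclusion by a different implementation of the contour-deformation idea: it first uses Lemma~\ref{lem:inv} to reduce to the single region $\xi/\xi^\prime>M(q,t)$, then enlarges the parameter space to \emph{independent} radii $\boldsymbol{\xi}\in(0,\infty)^n$, $\boldsymbol{\xi}^\prime\in(0,\infty)^m$ subject to \eqref{xixipReg1}--\eqref{xixipReg3}, and deforms one radius at a time through this open, pathwise connected set, invoking only one-variable Cauchy at each step. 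Your version buys a slicker global statement (homology invariance of the period) at the cost of the Stokes machinery; the paper's version is more elementary and, as a by-product, shows the integral is unchanged even for unequal radii within \eqref{xixipReg1}--\eqref{xixipReg3}, a flexibility that is reused later (e.g.\ in the proof of Lemma~\ref{lem:selfAdj}).

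One side remark in your last sentence is, however, incorrect and worth flagging: moving a single $x_i$ off the common radius does \emph{not} generically cross a pole of type \eqref{xpls}. Those poles sit at $|x_{i^\prime}/x_i|=tq^k\leq t<1$, so there is a whole pole-free annulus $t<\xi_i/\xi_{i^\prime}<t^{-1}$ around equal radii; this is exactly why the paper's one-radius-at-a-time deformation (within the slightly stricter window $t^{1/2}<\xi_i/\xi_{i^\prime}<t^{-1/2}$) is legitimate. Coherent scaling is therefore a convenience of your argument, not a necessity.
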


\begin{proof}
We note that $\xi/\xi^\prime<1/M(q,t)$ if and only if $(1/\xi)/(1/\xi^\prime)>M(q,t)$. Hence, thanks to Lemma \ref{lem:inv}, we may and shall restrict attention to the region $\xi/\xi^\prime>M(q,t)$.

By Cauchy's theorem, we can deform the integration contours in \eqref{prod1} one at a time, without changing the value of the integral, as long as we do not encounter any of the poles \eqref{xypls}--\eqref{ypls}. In particular, taking
$$
\boldsymbol{\xi} = (\xi_1,\ldots,\xi_n)\in (0,\infty)^n,\ \ \ \boldsymbol{\xi}^\prime = (\xi^\prime_1,\ldots,\xi^\prime_m)\in (0,\infty)^m,
$$
such that
\begin{subequations} 
\begin{equation} 
\label{xixipReg1}
t^{1/2}<\xi_i/\xi_{i^\prime}<t^{-1/2}\ \ (1\leq i\neq i^\prime\leq n),
\end{equation}
\begin{equation}
\label{xixipReg2}
q^{1/2}<\xi_j^\prime/\xi_{j^\prime}^\prime<q^{-1/2}\ \ (1\leq j\neq j^\prime\leq n), 
\end{equation}
\begin{equation}
\label{xixipReg3}
\xi_i/\xi_j^\prime>M(q,t)\ \ (i = 1,\ldots,n,\ j = 1,\ldots,m),
\end{equation}
\end{subequations}
we can replace $\T_\xi^n$ and $\T_{\xi^\prime}^m$ by
$$
\T_{\boldsymbol{\xi}}^n := \T_{\xi_1}\times\cdots\times \T_{\xi_n},\ \ \ \T_{\boldsymbol{\xi}^\prime}^m := \T_{\xi^\prime_1}\times\cdots\times \T_{\xi^\prime_m},
$$
respectively, and the resulting integral
$$
\int_{\T_{\boldsymbol{\xi}}^n}d\omega_n(x)\int_{\T_{\boldsymbol{\xi}^\prime}^m}d\omega_m(y)\, \Delta_{n,m}(x,y;q,t)P(x,y)\overline{Q}(x^{-1},y^{-1})
$$
is independent of $\boldsymbol{\xi}\in (0,\infty)^n$ and $\boldsymbol{\xi}^\prime\in (0,\infty)^m$ provided 
the inequalities \eqref{xixipReg1}--\eqref{xixipReg3} are satisfied. Indeed, these inequalities clearly define an open and (pathwise) connected subset of $(0,\infty)^n\times(0,\infty)^m$, so that, regardless of the initial integration radii, any admissible radii $\boldsymbol{\xi}$ and $\boldsymbol{\xi}^\prime$ can be reached in a finite number of steps, where each step consist of a deformation of a single radius. Since \eqref{xixipReg1}--\eqref{xixipReg3} are satisfied whenever $\xi_1=\cdots=\xi_n=\xi$ and $\xi^\prime_1=\cdots=\xi_m^\prime=\xi^\prime$ with $\xi/\xi^\prime>M(q,t)$, the lemma follows.
\end{proof} 

We note that this result leaves open the possibility that the sesquilinear form \eqref{prod1} takes different values in the two regions $\xi/\xi^\prime>M(q,t)$ and $\xi/\xi^\prime<1/M(q,t)$ --- this possibility is only ruled out by arguments given in Section~\ref{sec:proof2}.

To conclude, we show that the deformed Macdonald operator $\cM_{n,m;q,t}$ (as defined by \eqref{cMnm1}--\eqref{cMnm2}) is self-adjoint.

\begin{lemma}
\label{lem:selfAdj}
For all $P,Q\in\Lambda_{n,m;q,t}$, we have
\begin{equation}
\label{selfAdj}
\langle \cM_{n,m;q,t}P,Q\rangle_{n,m;q,t}^\prime(\xi,\xi^\prime) = \langle P,\cM_{n,m;q,t}Q\rangle_{n,m;q,t}^\prime(\xi,\xi^\prime) 
\end{equation}
provided $\xi,\xi'>0$ satisfy \eqref{xixipAlt}. 
\end{lemma}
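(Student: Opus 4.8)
The plan is to transfer the difference operator from the first to the second argument of the form by a change of variables in each shift term, governed by a functional (gauge) identity for the weight $\Delta_{n,m}$, and to control the ensuing contour deformations by exploiting the freedom in the integration radii granted by Lemma~\ref{lem:indep}. First I would observe that, since $\cM_{n,m;q,t}$ preserves $\Lambda_{n,m;q,t}$ (this is implicit in \eqref{SPeigEq}, the super-Macdonald polynomials being a basis), both sides of \eqref{selfAdj} are well defined and, by Lemma~\ref{lem:indep}, independent of $(\xi,\xi^\prime)$ throughout the region $\xi/\xi^\prime>M(q,t)$. It therefore suffices to verify \eqref{selfAdj} for one convenient choice, and I would take radii with $\xi/\xi^\prime$ \emph{sufficiently large}. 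Next I would record the behaviour of the conjugation \eqref{conj}: one checks $(T_{q,x_i}f)^*=T_{q^{-1},x_i}f^*$ and $(T_{t^{-1},y_j}f)^*=T_{t,y_j}f^*$, while, as $q,t$ are real, $A_i^*(x,y)=A_i(x^{-1},y^{-1};q,t)$ and likewise for $B_j^*$. Consequently
\[
(\cM_{n,m;q,t}Q)^* = \frac{t^{1-n}}{1-q}\sum_{i=1}^n A_i^*\big(T_{q^{-1},x_i}-1\big)Q^* + \frac{q^{m-1}}{1-t^{-1}}\sum_{j=1}^m B_j^*\big(T_{t,y_j}-1\big)Q^*,
\]
so the task reduces to matching $\langle\cM_{n,m;q,t}P,Q\rangle^\prime$ with $\int\Delta_{n,m}\,P\,(\cM_{n,m;q,t}Q)^*$ term by term.

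The computational heart is the gauge identity
\[
T_{q^{-1},x_i}\big(\Delta_{n,m}A_i\big) = \Delta_{n,m}\,A_i^*,\qquad T_{t,y_j}\big(\Delta_{n,m}B_j\big) = \Delta_{n,m}\,B_j^*,
\]
which I would establish by a direct telescoping computation using $(qa;q)_\infty=(1-a)^{-1}(a;q)_\infty$; remarkably, the shifted product of $\Delta_{n,m}$ and $A_i$ collapses to exactly $A_i^*$. Granting this, the $i$-th $x$-shift term of the left-hand side is treated by the substitution $x_i\to q^{-1}x_i$ (i.e.\ $w=qx_i$, moving that slot's contour from $\T_\xi$ to $\T_{q\xi}$), which, together with $Q^*(\dots,q^{-1}w,\dots)=(T_{q^{-1},x_i}Q^*)(\dots,w,\dots)$, turns
\[
\int \Delta_{n,m}\,A_i\,(T_{q,x_i}P)\,Q^* \quad\text{into}\quad \int_{|w|=q\xi} \Delta_{n,m}\,A_i^*\,P\,(T_{q^{-1},x_i}Q^*),
\]
and analogously for the $y_j$-terms via $y_j\to t y_j$.

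The main obstacle is then to deform each contour back (here $|w|=q\xi\to|w|=\xi$) without crossing poles of the now meromorphic integrand $\Delta_{n,m}A_i^*$. I expect the pole bookkeeping to be the delicate point, and I would argue as follows. The same-type poles \eqref{xpls} of $\Delta_n$ along $x_i=q^kt x_{i^\prime}$ with $k\geq 1$ lie outside the annulus, while the $k=0$ pole at $x_i=tx_{i^\prime}$ is cancelled by a zero of $A_i^*$; similarly, one of the two mixed poles \eqref{xypls}, the one at $x_i=q^{-1/2}t^{1/2}y_j$, is cancelled by a zero of $A_i^*$. The remaining poles (at $x_i=q^{1/2}t^{-1/2}y_j$ from $\Delta_{n,m}$ and at $x_i=q^{1/2}t^{1/2}y_j$ from $A_i^*$) sit at radii of order $\xi^\prime$, hence are pushed \emph{below} $q\xi$ by taking $\xi/\xi^\prime$ large enough; the mixed $y$-poles are pushed \emph{above} $t^{-1}\xi^\prime$ by the same choice. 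Thus the relevant annuli are pole-free, the deformations introduce no residues, and each transformed term equals the corresponding term of $\int\Delta_{n,m}P(\cM_{n,m;q,t}Q)^*$. This is precisely where the separation of radii (and Lemma~\ref{lem:indep}) is indispensable: the individual summands $A_iT_{q,x_i}P$ are not themselves Laurent polynomials, so only the closure $\cM_{n,m;q,t}P,\cM_{n,m;q,t}Q\in\Lambda_{n,m;q,t}$ (for which the symmetry conditions \eqref{qinv} are responsible) makes the full forms radius-independent.

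Finally I would handle the non-shift parts. Transferring the shifts leaves, on the two sides, the terms $-\int\Delta_{n,m}A_iPQ^*$ and $-\int\Delta_{n,m}A_i^*PQ^*$; summed with their coefficients their difference equals $(\bar c-c)\int\Delta_{n,m}PQ^*$, where $c=\tfrac{t^{1-n}}{1-q}\sum_i A_i+\tfrac{q^{m-1}}{1-t^{-1}}\sum_j B_j$ is the constant identified in Appendix~\ref{app:Id}. Since $q,t$ are real this constant is real, so $\bar c=c$ and the discrepancy vanishes. Collecting the matched shift terms and this vanishing remainder yields \eqref{selfAdj} at the chosen radii, and a last appeal to Lemma~\ref{lem:indep} propagates it to all admissible $(\xi,\xi^\prime)$, completing the argument.
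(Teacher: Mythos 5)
Your proposal is correct and follows essentially the same route as the paper: reduce via Lemma~\ref{lem:indep} to well-separated radii, absorb the non-shift terms into the real constant of Appendix~\ref{app:Id}, transfer each shift term by the substitution $x_i\to q^{\mp1}x_i$ using the identity $T_{q^{-1},x_i}(\Delta_{n,m}A_i)=\Delta_{n,m}A_i^*$ (which is the paper's relation $T_{q,x_i}W_i=q^{-m}A_iW_i$ in disguise), and deform the contour back through a pole-free annulus. The only point to tidy up is that you explicitly treat just the region $\xi/\xi^\prime>M(q,t)$, whereas the statement also covers $\xi/\xi^\prime<1/M(q,t)$; this follows either by the mirror-image pole estimate with $\xi/\xi^\prime$ sufficiently small (the paper works under the single condition \eqref{xixip1}, which covers both regimes at once) or by conjugating with Lemma~\ref{lem:inv}.
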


\begin{proof}
We observe that the terms in $\cM_{n,m;q,t}$ \eqref{cMnm1}--\eqref{cMnm2} not involving shift operators add up to a real constant,  
which clearly is self-adjoint:
\begin{equation}
\label{Id}  
\frac{t^{1-n}}{1-q}\sum_{i=1}^nA_i(x,y;q,t) + \frac{q^{m-1}}{1-t^{-1}}B_j(x,y;q,t) = \frac{1-t^{-n}q^m}{(1-t^{-1})(1-q)} ; 
\end{equation} 
for the convenience of the reader, we include an elementary proof of this identity in Appendix~\ref{app:Id}.

Since $\cM_{n,m;q,t}$ leaves $\Lambda_{n,m;q,t}$ invariant \cite[Proposition 5.3]{SV09a}, Lemma \ref{lem:indep} ensures that no generality is lost when replacing \eqref{xixipAlt} with
\begin{equation} 
\label{xixip1} 
\xi/\xi^\prime<(qt)^{1/2}\ \  \text{ or } \ \ \xi/\xi^\prime>(qt)^{-1/2}.
\end{equation} 
Under this stronger condition on the integration radii $\xi$ and $\xi^\prime$, we proceed to show that all terms $A_i(x,y;q,t)T_{q,x_i}$ and $B_j(x,y;q,t)T_{t^{-1},y_j}$, for $i=1,\ldots,n$ and $j=1,\ldots,m$, in $\cM_{n,m;q,t}$ are separately self-adjoint. In what follows, we do not indicate the dependence on $q$ and $t$, to simplify notation. 

Fixing $i=1,\ldots,n$, we introduce the function
\begin{equation}
\label{Wi}  
W_i(x,y):= \prod_{i'\neq i}^n \frac{(x_i/x_{i'};q)_\infty}{(tx_i/x_{i'};q)_\infty}\cdot \prod_{j=1}^m \frac{1}{1-q^{-1/2}t^{1/2}x_i/y_j}, 
\end{equation} 
so that $W_i(x,y)W_i^*(x,y)$ amounts to all $x_i$-dependent factors in $\Delta_{n,m}(x,y)$, and note that self-adjointness of $A_i(x,y)T_{q,x_i}$ follows once we show that
\begin{multline}
\label{ATId}
\oint_{|x_i|=\xi} \frac{dx_i}{2\pi\ii x_i} W_i(x,y)W_i^*(x,y) Q^*(x,y)A_i(x,y)T_{q,x_i}P(x,y)  \\
= \oint_{|x_i|=\xi} \frac{dx_i}{2\pi\ii x_i}  W_i(x,y)W_i^*(x,y) P(x,y) (A_i(x,y)T_{q,x_i}Q)^*(x,y).
\end{multline} 
To this end, we observe that
\begin{equation*} 
\frac{T_{q,x_i} W_i(x,y)}{W_i(x,y)} =  \prod_{i'\neq i}^n \frac{1-tx_i/x_{i'}}{1-x_i/x_{i'}}\cdot \prod_{j=1}^m \frac{(1-q^{-1/2}t^{1/2}x_i/y_j)}{(1-q^{1/2}t^{1/2}x_i/y_j)} = q^{-m}A_i(x,y), 
\end{equation*}
and using this, we can write \eqref{ATId} as follows,
\begin{multline*} 
\oint_{|x_i|=\xi} \frac{dx_i}{2\pi\ii x_i} W_i^*(x,y) Q^*(x,y) (T_{q,x_i}W_iP)(x,y) \\
= \oint_{|x_i|=\xi} \frac{dx_i}{2\pi\ii x_i}  W_i(x,y) P(x,y) (T_{q,x_i}W_iQ)^*(x,y).  
\end{multline*}
We now change variables $x_i\to q x_i$ in the latter integral to obtain 
\begin{multline} 
\label{A1}
\oint_{|x_i|=\xi/q} \frac{dx_i}{2\pi\ii x_i} (T_{q,x_i}W_iP)(x,y) W_i^*(x,y)Q^*(x,y) \\
= \oint_{|x_i|=\xi} \frac{dx_i}{2\pi\ii x_i} (T_{q,x_i}W_iP)(x,y) W_i^*(x,y)Q^*(x,y),
\end{multline}
where the equality holds true due to Cauchy's theorem, since the integrand, which is the same in both integrals, is an analytic function of $x_i$ in the region $\xi\leq |x_i|\leq \xi/q$ when
\begin{equation} 
\label{A2}
|x_{i'}|=\xi\quad (i'=1,\ldots,i-1,i+1,\ldots,n),\quad |y_j|=\xi^\prime\quad (j=1,\ldots,m)
\end{equation} 
and \eqref{xixip1} is satisfied. A proof of this analyticity property of the integrand can be found in Appendix~\ref{app:check}. We have thus established \eqref{ATId} and, as previously noted, self-adjointness of $A_i(x,y)T_{q,x_i}$ immediately follows.

A proof of self-adjointness of the terms $B_j(x,y)T_{t^{-1},y_j}$, under the condition \eqref{xixip1}, can be obtained in a similar manner, and the details are therefore omitted.\footnote{The argument proving the self-adjointness of $B_j(x,y)T_{t^{-1},y_j}$ can be obtained from the one for $A_i(x,y)T_{q,x_i}$ by swapping $(n,x,q,\xi)\leftrightarrow(m,y,t^{-1},\xi^\prime)$.} 
\end{proof}

\subsection{Proof of Theorem~\ref{thm}}
\label{sec:proof2} 
Making use of results from Section~\ref{sec:proof1}, we prove the two parts of the theorem in reverse order.

\subsubsection{Part (b)}
Using \eqref{SPeigEq} and Lemma~\ref{lem:selfAdj}, we deduce
\begin{equation*} 
\big(d_\lambda(q,t)-d_\mu(q,t)\big)\langle SP_\lambda,SP_\mu\rangle_{n,m;q,t}^\prime(\xi,\xi^\prime) = 0.
\end{equation*}
Assuming that $\lambda\neq \mu$, we see from \eqref{dlam} that
$$
p_{\lambda\mu}(q,t) := t^{\max(\ell(\lambda),\ell(\mu))-1}\big(d_\lambda(q,t)-d_\mu(q,t)\big)
$$
is a non-zero polynomial function in $q$ and $t$ for $0<q,t<1$. Introducing its zero set
$$
Z_{\lambda\mu} := \{(q,t)\in(0,1)^2\mid p_{\lambda\mu}(q,t)=0\},
$$
we can thus conclude that $\langle SP_\lambda,SP_\mu\rangle_{n,m;q,t}^\prime(\xi,\xi^\prime)$ must vanish for all $(q,t)\in (0,1)^2\setminus Z_{\lambda\mu}$. Since the Hermitian form $\langle\cdot,\cdot\rangle_{n,m;q,t}^\prime(\xi,\xi^\prime)$ depends continuously on $(q,t)$ and $(0,1)^2\setminus Z_{\lambda\mu}$ is a dense (open) subset of $(0,1)^2$, it clearly follows that the orthogonality relations \eqref{orth} hold true for all $(q,t)\in(0,1)^2$.

We proceed to compute the norms $N_{n,m}(\lambda;q,t):=\langle SP_\lambda,SP_\lambda\rangle_{n,m;q,t}^\prime(\xi,\xi^\prime)$. Due to Lemma~\ref{lem:indep}, we can do this by fixing $\xi^\prime=1$ (say) and taking the limit $\xi\to\infty$. To this end, we note that the asymptotic behaviour of the weight function in \eqref{wghtFunc} for $x\in \T_\xi^n$ and $y\in \T^m$ as $\xi\to\infty$ is given by
\begin{equation}
\label{DelnmAs}
\begin{split} 
\Delta_{n,m}(x,y;q,t) =  \frac{\Delta_n(x;q,t)\Delta_m(y;t,q)}{\prod_{i=1}^n\prod_{j=1}^m(-q^{-1/2}t^{1/2}x_i/y_j)}(1+O(1/\xi)) \\ 
= (-q^{-1/2}t^{1/2})^{-nm} e_n^m(x^{-1})e_m^n(y)\Delta_n(x;q,t)\Delta_m(y;t,q)(1+O(1/\xi))
\end{split} 
\end{equation}
where 
\begin{equation*} 
e^m_n(x^{-1}):= (x_1\cdots x_n)^{-m},\ \ \ e_m^n(y):=(y_1\cdots y_m)^{n}.
\end{equation*} 
Introducing the notation
\begin{equation} 
\label{muminmax} 
\mu_\text{min} := (\langle \lambda^\prime_1-n\rangle,\ldots,\langle \lambda^\prime_m-n\rangle), \ \ \ 
\mu_\text{max} := (\lambda^\prime_1,\ldots,\lambda^\prime_m),
\end{equation} 
where $\langle k\rangle:=\max(0,k)$, we use Lemma \ref{lem:SPExp} and \eqref{homP} 
 to deduce that, for $x\in \T_\xi^n$ and $y\in \T^m$, 
\begin{multline}
\label{SPprod}
SP_\lambda(x,y;q,t)SP_\lambda(x^{-1},y^{-1};q,t)\\
= (-q^{-1/2}t^{1/2})^{|\mu_\text{min}|+|\mu_\text{max}|}P_{\lambda/\mu_\text{min}^\prime}(x;q,t)P_{\lambda/\mu_\text{max}^\prime}(x^{-1};t,q)\\
\cdot Q_{\mu_\text{min}}(y;t,q)Q_{\mu_\text{max}}(y^{-1};t,q)+O(\xi^{|\mu_\text{max}|-|\mu_\text{min}|-1})
\end{multline}
as $\xi\to\infty$. From \eqref{DelnmAs}--\eqref{SPprod} we readily obtain, 
\begin{multline*}
\label{NnmExp}
N_{n,m}(\lambda;q,t) = (-q^{-1/2}t^{1/2})^{|\mu_\text{min}|+|\mu_\text{max}|-mn}\cdot \xi^{|\mu_\text{max}|-|\mu_\text{min}|-nm}\\
\cdot\langle P_{\lambda/\mu_\text{min}^\prime},e_n^mP_{\lambda/\mu_\text{max}^\prime}\rangle_{n;q,t}^\prime \langle e_m^nQ_{\mu_\text{min}},Q_{\mu_\text{max}}\rangle_{m;t,q}^\prime\\
+O(\xi^{|\mu_\text{max}|-|\mu_\text{min}|-nm-1})\ \ \ (\xi\to\infty)
\end{multline*}
using \eqref{homP}, with $\langle\cdot,\cdot\rangle_{n;q,t}$ in \eqref{prod0} 
(the factor $\xi^{|\mu_\text{max}|-|\mu_\text{min}|-nm}$ is due to the change of variables $x\to x/\xi$ transforming $\T_\xi^n\to \T_1^n=\T^n$). 
Since $|\mu_\text{max}|-|\mu_\text{min}|\leq nm$ with equality if and only if $(m^n)\subseteq\lambda$, the validity of \eqref{zNnm} immediately follows.

In the remaining cases $(m^n)\subseteq\lambda$,  
\begin{equation} 
\label{mumaxmin} 
\mu_\text{max}=\mu_\text{min}+(n^m),\ \ \ \mu_\text{min}=s(\lambda)
\end{equation} 
(cf.\ \eqref{es} and \eqref{muminmax}), and therefore 
\begin{equation*} 
(-q^{-1/2}t^{1/2})^{|\mu_\text{min}|+|\mu_\text{max}|-mn}\cdot \xi^{|\mu_\text{max}|-|\mu_\text{min}|-nm} = (t/q)^{|s(\lambda)|}
\end{equation*} 
independent of $\xi$. Thus, by taking the limit $\xi\to\infty$, we obtain 
\begin{equation} 
\label{Nnm0}
N_{n,m}(\lambda;q,t) =  (t/q)^{|s(\lambda)|} \langle P_{\lambda/\mu_\text{min}^\prime},e_n^mP_{\lambda/\mu_\text{max}^\prime}\rangle_{n;q,t}^\prime \langle e_m^nQ_{\mu_\text{min}},Q_{\mu_\text{max}}\rangle_{m;t,q}^\prime
\end{equation} 
with $\mu_\text{max}$ and $\mu_\text{min}$ in \eqref{mumaxmin}. 

We are left to compute the scalar products in \eqref{Nnm0}. We start with the second one: 
\begin{equation*} 
\begin{split} 
\langle e_m^nQ_{\mu_\text{min}},Q_{\mu_\text{max}}\rangle_{m;t,q}^\prime = & \langle e_m^nQ_{s(\lambda)},Q_{s(\lambda)+(n^m)}\rangle_{m;t,q}^\prime \\ 
= & b_{s(\lambda)}(t,q) b_{s(\lambda)+(n^m)}(t,q) \langle P_{s(\lambda)+(n^m)},P_{s(\lambda)+(n^m)}\rangle_{m;t,q}^\prime \\
= &  b_{s(\lambda)}(t,q) b_{s(\lambda)+(n^m)}(t,q)  N_m(s(\lambda);t,q)
\end{split} 
\end{equation*} 
by \eqref{Q}, \eqref{orth0} and \eqref{box}, using   
\begin{equation*} 
N_n(\lambda+(k^n);q,t)= N_n(\lambda;q,t)\quad (k\in\Z_{\geq 1}); 
\end{equation*} 
cf.~\eqref{Nn}. The first product can be computed in a similar manner, using \eqref{Pskew}, 
\begin{equation*} 
\label{prodP} 
\begin{split} 
\langle P_{\lambda/\mu_\text{min}^\prime},e_n^mP_{\lambda/\mu_\text{max}^\prime}\rangle_{n;q,t}^\prime = & \sum_{\nu_1,\nu_2} 
f^{\lambda^\prime}_{\mu_\text{min},\nu_1^\prime}(t,q) f^{\lambda^\prime}_{\mu_\text{max},\nu_2^\prime}(t,q)\langle P_{\nu_1},P_{\nu_2+(m^n)}\rangle^\prime_{n;q,t} \\
= & \sum_\nu f^{\lambda^\prime}_{\mu_\text{min},(\nu+(n^m))^\prime}(t,q) f^{\lambda^\prime}_{\mu_\text{max},\nu^\prime}(t,q) N_n(\nu+(m^n);q,t) \\
= &  \sum_\nu f^{\lambda^\prime}_{s(\lambda),(\nu+(m^n))^\prime}(t,q) f^{\lambda^\prime}_{s(\lambda)+(n^m),\nu^\prime}(t,q) N_n(\nu;q,t) . 
\end{split} 
\end{equation*} 
By Lemma~\ref{lem:f}, $f^{\lambda^\prime}_{s(\lambda),(\nu+(m^n)^\prime}$ and $f^{\lambda^\prime}_{s(\lambda)+(n^m),\nu^\prime}$ are non-zero only if 
\begin{equation} 
\label{cnu1} 
s(\lambda)^\prime\cup(\nu+(m^n))\leq \lambda 
\end{equation} 
and 
\begin{equation} 
\label{cnu2} 
\lambda\leq (s(\lambda)+(n^m))^\prime + \nu = (s(\lambda)^\prime\cup(m^n)) + \nu, 
\end{equation} 
respectively. The largest $\nu$ (in the sense of dominance ordering) satisfying \eqref{cnu1} is $\nu=e(\lambda)$ and, for this partition and only this,
\begin{equation*} 
s(\lambda)^\prime\cup(\nu+(m^n))=\lambda \ \ \text{ for } \nu=e(\lambda).  
\end{equation*} 
Similarly, the smallest $\nu$ satisfying \eqref{cnu2} is $\nu=e(\lambda)$ and, in this case and only then, 
\begin{equation*} 
(s(\lambda)^\prime\cup(m^n)) + \nu =\lambda  \ \ \text{ for } \nu=e(\lambda).  
\end{equation*} 
Thus, the $\nu$-sum in the last expression for $\langle P_{\lambda/\mu_\text{min}^\prime},e_n^mP_{\lambda/\mu_\text{max}^\prime}\rangle_{n;q,t}^\prime$ above has only a single non-zero term, namely $\nu=e(\lambda)$, and therefore 
\begin{equation*} 
\langle P_{\lambda/\mu_\text{min}^\prime},e_n^mP_{\lambda/\mu_\text{max}\prime}\rangle_{n;q,t}^\prime 
= f^{\lambda^\prime}_{s(\lambda),(e(\lambda)+(m^n))^\prime}(t,q) f^{\lambda^\prime}_{s(\lambda)+(n^m),e(\lambda)^\prime}(t,q) N_n(e(\lambda);q,t) .
\end{equation*} 
From Lemma~\ref{lem:f} we get 
\begin{equation*} 
f^{\lambda^\prime}_{s(\lambda),(e(\lambda)+(m^n))^\prime}(q,t)=1,\ \ \ 
f^{\lambda^\prime}_{s(\lambda)+(n^m),e(\lambda)^\prime}(t,q) =\frac{b_{s(\lambda)'\cup(m^n)}(q,t)b_{e(\lambda)}(q,t)}{b_\lambda(q,t)}. 
 \end{equation*} 
By inserting these results in \eqref{Nnm0} we obtain 
\begin{equation*} 
\begin{split} 
N_{n,m}(\lambda;q,t) =  (t/q)^{|s(\lambda)|} \frac{b_{s(\lambda)'\cup(m^n)}(q,t)b_{e(\lambda)}(q,t)}{b_\lambda(q,t)}N_n(e(\lambda);q,t) \\ \cdot 
b_{s(\lambda)}(t,q) b_{s(\lambda)+(n^m)}(t,q)  N_m(s(\lambda);t,q), 
\end{split} 
\end{equation*} 
and since 
\begin{equation*} 
b_{s(\lambda)'\cup(m^n)}(q,t)  = b_{(s(\lambda)+(m^n))^\prime}(q,t) = \frac1{b_{s(\lambda)+(m^n)}(t,q)}
\end{equation*} 
by  \eqref{blam}, we arrive at the result in \eqref{nzNnm}. \qed

\subsubsection{Part (a)}
Since $\Lambda_{n,m;q,t}$ is spanned by the super-Macdonald polynomials $SP_\lambda((x_1,\ldots,x_n),(y_1,\ldots,y_m);q,t)$, $\lambda\in H_{n,m}$, and \eqref{zNnm}--\eqref{nzNnm} clearly imply that the (quadratic) norms $N_{n,m}(\lambda;q,t)$ are real, it follows from Part (b) that our sesquilinear form $\langle\cdot,\cdot\rangle_{n,m;q,t}^\prime(\xi,\xi^\prime)$ \eqref{prod1} is Hermitian, i.e.
$$
\langle P,Q\rangle_{n,m;q,t}^\prime(\xi,\xi^\prime) = \overline{\langle Q,P\rangle_{n,m;q,t}^\prime(\xi,\xi^\prime)},\ \ \ (P,Q\in \Lambda_{n,m;q,t}).
$$

By invoking Lemma \ref{lem:inv}, we can thus infer
$$
\langle P,Q\rangle_{n,m;q,t}^\prime(\xi,\xi^\prime) = \overline{\langle Q,P\rangle_{n,m;q,t}^\prime(1/\xi,1/\xi^\prime)} = \langle P,Q\rangle_{n,m;q,t}^\prime(1/\xi,1/\xi^\prime).
$$
Due to the fact that $\xi/\xi^\prime<1/M(q,t)$ if and only if $(1/\xi)/(1/\xi^\prime)>M(q,t)$, Lemma \ref{lem:indep} implies that $\langle\cdot,\cdot\rangle_{n,m;q,t}^\prime(\xi,\xi^\prime)$ is independent of $\xi,\xi^\prime$ as long as \eqref{xixipAlt} (or equivalently \eqref{xixip}) is satisfied. This concludes the proof of Part (a) and hence the theorem.

\section{Conclusions and outlook}
\label{sec:final}
We introduced a Hermitian product $\langle\cdot,\cdot\rangle^\prime_{n,m;q,t}$, given by \eqref{prod0S}, on the algebra $\Lambda_{n,m;q,t}$, in which the super-Macdonald polynomials constitute an orthogonal basis (cf.~\cite[Theorem 5.6]{SV09a} and Theorem \ref{thm}), and we proved, in particular, that this product endows the factor space
$V_{n,m;q,t}=\Lambda_{n,m;q,t}/K_{n,m;q,t}$, where $K_{n,m;q,t}$ denotes the kernel of $\langle\cdot,\cdot\rangle^\prime_{n,m;q,t}$, with a 
Hilbert space structure.
Furthermore, we argued that these results provides the means for a quantum mechanical interpretation of the model defined by the deformed Macdonald operators $\cM_{n,m;q,t}$ and $\cM_{n,m;q^{-1},t^{-1}}$, cf.~\eqref{cMnm1}--\eqref{cMnm2}. This model describes two kinds of particles, and we proposed that they represent particles and anti-particles in an underlying relativistic quantum field theory, which is the same theory that inspired the Ruijsenaars models \cite{RS86,Rui01}. 

As mentioned in the introduction, from the quantum field theory point of view, it would be interesting to generalise our results to the elliptic case. 
The elliptic generalisation of the deformed Macdonald-Ruijsenaars operator is known from \cite{AHL14}. Specifically, rewriting the additive difference operator in Eq.~(70) in multiplicative form, we obtain
\begin{equation*}
\label{cMnm1e} 
M_{n,m;p,q,t} =  \frac{t^{1-n}q^m}{\theta(q;p)} \sum_{i=1}^n A_i(x,y;p,q,t)T_{q,x_i}+ \frac{t^{-n}q^{m-1}}{\theta(t^{-1};p)} \sum_{j=1}^m B_j(x,y;p,q,t)T_{t^{-1},y_j}
\end{equation*}
with the elliptic deformation parameter, $p$, in the range $0\leq p<1$ and coefficients
\begin{equation*} 
\label{cMnm2e}
\begin{split}
A_i(x,y;p,q,t) &= \prod_{i^\prime \neq i}^n \frac{\tet(tx_i/x_{i^\prime};p)}{\tet(x_i/x_{i^\prime};p)}\cdot \prod_{j=1}^m \frac{\tet(t^{1/2} x_i/q^{1/2} y_j;p)}{\tet(t^{1/2}q^{1/2} x_i/y_j;p)},\\
B_j(x,y;p,q,t) &= \prod_{j^\prime \neq j}^m \frac{\tet(q^{-1}y_j/y_{j^\prime};p)}{\tet(y_j/y_{j^\prime};p)}\cdot \prod_{i=1}^n \frac{\tet(q^{-1/2}t^{1/2} y_j/ x_i;p)}{\tet(q^{-1/2} y_j/t^{1/2} x_i;p)},
\end{split} 
\end{equation*} 
where $\tet(z;p) : = (z;p)_\infty(p/z;p)_\infty$. We note that $M_{n,m;p,q,t}$ reduces, up to an additive constant, to the deformed Macdonald--Ruijsenaars operator $M_{n,m;q,t}$ from \eqref{Mnm} in the trigonometric limit $p\to 0$. 
Moreover, the results in \cite{AHL14} suggest that a natural elliptic generalisation of our Hermitian product is as in \eqref{prod0S} but with the weight function
\begin{equation*}
\label{wghtFunce}
\Delta_{n,m}(x,y;p,q,t) = \frac{\Delta_n(x;p,q,t)\Delta_m(y;p,t,q)}{\prod_{i=1}^n\prod_{j=1}^m\tet(q^{-1/2}t^{1/2}x_i/y_j;p)\tet(q^{-1/2}t^{1/2}y_j/x_i;p)}, 
\end{equation*}
\begin{equation*}
\label{Delne}
\Delta_n(x;p,q,t) = \prod_{1\leq i\neq j\leq n}\frac{\Gamma(tx_i/x_j;p,q)}{\Gamma(x_i/x_j;p,q)},
\end{equation*}
where $\Gamma(z;p,q):= \prod_{k=0}^\infty (p^{k+1}q/z;q)_\infty/(p^k z;q)_\infty$ is the elliptic Gamma function; note that, in the limiting case $(n,m)=(n,0)$, this reduces to the operator and weight function of the elliptic Ruijsenaars model (see e.g.~\cite[Section~5]{Has97}).
However, at this point, very little is known about the eigenfunctions of the deformed elliptic Macdonald-Ruijsenaars operator $M_{n,m;p,q,t}$. 
In fact, even in the ordinary $m=0$ case, the understanding of these eigenfunctions is still not complete; see, however, \cite{Shi19,LNS20} for recent progress in this direction. 
Our results provide further motivation for any attempt at developing a theory of eigenfunctions at the deformed elliptic level, and generalising the results in \cite{Shi19,LNS20} to the deformed case could be an interesting starting point.

In the non-relativistic limit $q\to 1$, the trigonometric Ruijsenaars model reduces to the trigonometric Calogero-Sutherland model and, in this case, a quantum field theory formulation is known, which naturally includes the deformed models \cite{AL17}. Moreover, parts of this construction were extended recently to the elliptic case \cite[Section III.A]{BLL20}. Our results in this paper suggest that these quantum field theory results can be generalised to the Ruijsenaars case. A natural starting point would be a well-established quantum field theory description of the trigonometric Ruijsenaars model \cite{SKA92}, which allows for an elliptic generalisation \cite{FHHSY09}.

It is interesting to note that, while the deformed elliptic Calogero-Sutherland (eCS) system first appeared more than 10 years ago in a systematic search for kernel functions for eCS-type systems \cite{Lan10}, this very model recently appeared in the context of super-symmetric gauge theories \cite{Nek17,CKL20}. In our above-mentioned paper \cite{AHL14}, we obtained the deformed elliptic Ruijsenaars model by generalising the former results to the relativistic case; it would be interesting to also establish relativistic generalisations of the latter results.

Finally, we note that the result in Eqs.~\eqref{orth}--\eqref{nzNnm} remains true even for complex $q$ and $t$ such that $0<|q|<1$ and $0<|t|<1$, provided that the definition of $Q^*(x,y)$ in \eqref{prod} is changed to 
\begin{equation} 
Q^*(x,y) = Q(x^{-1},y^{-1})
\end{equation} 
(i.e.~no complex conjugation). 
However, then $\langle\cdot,\cdot\rangle^\prime_{n,m;q,t}$ is not sesquilinear and (in general) not positive (semi)definite, and thus no longer provides $V_{n,m;q,t}$ with a Hilbert space structure.

\appendix

\section{Conventions used by Sergeev and Veselov}
\label{app:SV} 
Here we explain the relation between the conventions for the super-Macdonald polynomials used in this paper, and the ones used by Sergeev and Veselov (SV) \cite{SV09a}. 
As will be made clear, it is easy to translate from one convention to the other.
Moreover, both conventions have their advantages and disadvantages.
More specificially, the advantages of our conventions are that the super-Macdonald polynomials are manifestly invariant under $(q,t)\to (q^{-1},t^{-1})$, and that Hilbert space adjungation agrees with what one would naively expect; cf.~\eqref{inv} vs.\ \eqref{Cinv} and \eqref{conj} vs.\ \eqref{Cconj}. The advantage of the SV-conventions is that factors $t^{\pm 1/2}$ and $q^{\pm 1/2}$ are avoided, and that some formulas look somewhat more symmetric; cf.~\eqref{cMnm2} vs.\ \eqref{CcMnm2}, \eqref{qinv} vs.\ \eqref{Cqinv}, and \eqref{SPreps} vs.\ \eqref{CSPreps}. 

The deformed power sums used in \cite{SV09a} are 
\begin{equation} 
\label{prX} 
p^{(\SV)}_{r}(x,y;q,t)= \sum_{i=1}^n x_i^r +\frac{1-q^r}{1-t^{-r}} \sum_{j=1}^m y_j^r \quad (r\in\Z_{\geq 1}); 
\end{equation} 
note that our $t^{-1}$ corresponds to $t$ in \cite{SV09a}.
The algebra endomorphism $\varphi^{(\SV)}_{n,m}$ defining the super-Macdonald polynomials is defined exactly as in \eqref{varphi}--\eqref{SPdef2} but with the deformed power sums in \eqref{prX}. 
Clearly, the deformed power sums in \eqref{prX} are obtained from ours in \eqref{pnmr} by the transformations $(x,y)\to (x,q^{1/2}t^{1/2}y)$, and this implies 
\begin{equation} 
\label{translate} 
SP^{(\SV)}_\lambda(x,y;q,t) = SP_\lambda(x;q^{1/2}t^{1/2}y;q,t). 
\end{equation} 

We recall that the coefficents $c_{\lambda\mu}(q,t)$ in \eqref{SPdef1} are invariant under the transformation $(q,t)\to (q^{-1},t^{-1})$; see \eqref{qinvtinv}. 
However, the arguments $(x,y)$ of $p^{(\SV)}_{r}(x,y;q,t)$ transform under this transformation to  $(x,q^{-1}t^{-1}y)$. 
Thus, 
\begin{equation} 
\label{Cinv}
SP^{(\SV)}_\lambda(x,y;q^{-1},t^{-1}) = SP^{(\SV)}_\lambda(x;q^{-1}t^{-1}y;q,t).
\end{equation} 

Our scalar product in the SV-conventions can be written as in \eqref{prod} but with a slightly altered weight function, another definition of conjugation, and different constraints on the radii $\xi,\xi^\prime>0$: 
\begin{equation} 
\Delta^{(\SV)}_{n,m}(x,y;q,t) =  \frac{\Delta_{n}(x;q,t) \Delta_{m}(y;t,q)}{\prod_{i=1}^n\prod_{j=1}^m (1-q^{-1}x_i/y_j)(1-ty_j/x_i)}
\end{equation} 
with $\Delta_{n}(x;q,t)$ in \eqref{Deln}, 
\begin{equation} 
\label{Cconj} 
Q^*(x,y) \equiv
\overline{Q(\bar x^{-1},q^{-1}t^{-1}\bar y^{-1})} , 
\end{equation} 
and 
\begin{equation} 
\xi/\xi^\prime < \min(q,t)\ \ \ \text{ or } \ \ \ \xi/\xi^\prime >\max(q,t) . 
\end{equation} 

For the convenience of the reader, we also give other important formulas in the SV-conventions. First, formulas for the coefficients in \eqref{cMnm2} defining the deformed Macdonald operators in \eqref{cMnm1}:
\begin{equation} 
\label{CcMnm2}
\begin{split} 
A^{(\SV)}_i(x,y;q,t) &= 
t^{n-1}\prod_{i^\prime \neq i}^n \frac{x_i - t^{-1}x_{i^\prime}}{x_i- x_{i^\prime}}\cdot \prod_{j=1}^m \frac{x_i - qy_j}{x_i - y_j}, 
\\
B^{(\SV)}_j(x,y;q,t) &= 
 q^{1-m}\prod_{j^\prime \neq j}^m \frac{y_j - qy_{j^\prime}}{y_j - y_{j^\prime}}\cdot \prod_{i=1}^n \frac{y_j - t^{-1} x_i}{y_j - x_i}.
\end{split} 
\end{equation}  
Second, the symmetry conditions in \eqref{qinv} that characterise the algebra $\Lambda^{(\SV)}_{n,m;q,t}$ spanned by the super-Macdonald polynomials: 
\begin{equation} 
\label{Cqinv} 
\left(T_{q,x_i}-T_{t^{-1},y_j} \right) P(x,y) =0\ \ \text{ at } \ \ x_i=y_j \ \ \ (\forall i,j).
\end{equation} 
Third, the representation of the super-Jack polynomials in \eqref{SPreps}: 
\begin{equation}
\label{CSPreps} 
SP^{(\SV)}_\lambda(x,y;q,t) = \sum_\mu (-t)^{|\mu|} P_{\lambda/\mu^\prime}(x;q,t)Q_\mu(y;t,q). 
\end{equation}

\section{Relativistic invariance of deformed Ruijsenaars model}
\label{app:relativistic} 
We show that the deformed Ruijsenaars model defined by the operators $\cM_{n,m;q,t}$ and $\cM_{n,m;q^{-1},t^{-1}}$ in \eqref{cMnm1}--\eqref{cMnm2} is relativistically invariant. 
We use the operator $M_{n,m;q,t}$ obtained from $\cM_{n,m;q,t}$ by dropping a constant term; see \eqref{Mnm} and \eqref{Id}. 

As explained by Ruijsenaars in his pioneering paper \cite[Eqs.\ (2.17)--(2.18)]{Rui87}, the Ruijsenaars models are relativistically invariant in the sense that certain operators $\hat S_1$ and $\hat S_{-1}$ provide a representation of the Lie algebra of the Poincar\'e group in 1+1 spacetime dimensions, i.e., they give operators $\hat H$ (Hamiltonian), $\hat P$ (momentum operator) and $\hat B$ (boost operator) satisfying the following commutation relations, 
\begin{equation} 
\label{HPB} 
[\hat H,\hat P]= 0,\quad [\hat H,\hat B]=\ii \hat P,\quad [\hat P,\hat B] = \ii \hat H,
\end{equation} 
(where, for simplicity, we use units such that $m=c=1$). 
This argument straightforwardly generalises to the deformed case:  {\em The operators 
\begin{equation*}
\begin{split} 
\hat H = & \Psi_0\frac12(M_{n,m;q,t} + M_{n,m;q^{-1},t^{-1}})\frac1{\Psi_0}, \\
\hat P = &  \Psi_0\frac12(M_{n,m;q,t} - M_{n,m;q^{-1},t^{-1}})\frac1{\Psi_0}, \\
\hat B =  & \Psi_0 B \frac1{\Psi_0}=B,\ \ \ B:=\ii\sum_{i=1}^n\frac{\log(x_i)}{\log(q)} - \ii \sum_{j=1}^m\frac{\log(y_j)}{\log(t)}, 
\end{split} 
\end{equation*}   
with $\Psi_0=\Psi_0(u,v;\beta,\gamma)$ in \eqref{Psi0}, satisfy the commutation relations in \eqref{HPB}.} 

To see this, we note that the operators $M_{n,m;q,t}$ and $M_{n,m;q^{-1},t^{-1}}$ commute on the space $\Lambda_{n,m;q,t}$ generated by the super-Macdonald polynomials.\footnote{This is implied by \eqref{inv} and the fact that the super-Macdonald polynomials are eigenfunctions of the operators $M_{n,m;q,t}$.} This is equivalent to the first relation in \eqref{HPB}. (From \cite[Lemma 3.1]{HLNR21} follows that commutativity as operators on $\Lambda_{n,m;q,t}$ implies commutativity as difference operators.)
 The second and the third relations in \eqref{HPB} are equivalent to  
\begin{equation*} 
[M_{n,m;q^{\pm 1},t^{\pm 1}},B] = \pm\ii M_{n,m;q^{\pm 1},t^{\pm 1}}, 
\end{equation*} 
which is easy to check using the following non-trivial commutation relations following from the definition of the shift operators, 
\begin{equation*} 
[T_{q^{\pm 1},x_i},\log(x_i)]= \pm \log(q)T_{q^{\pm},x_i}, \quad [T_{t^{\mp 1},y_j},\log(y_j)]= \mp \log(t)T_{t^{\mp 1},x_i}. 
\end{equation*}

\section{Proof details}
\label{app:proof} 
\subsection{Proof of Lemma~\ref{lem:f}}
\label{app:f} 
\begin{remark} 
We adapt a proof in the Jack polynomial case \cite[4.1 Proposition]{Sta89}.
\end{remark} 

The monomial functions in \eqref{mlam} satisfy  
\begin{equation*} 
m_\mu m_\nu = m_{\mu+\nu} + \text{lower order terms}
\end{equation*} 
for all partitions $\mu,\nu$, with ``lower order terms'' standing for a linear combination of $m_\lambda$ with $\lambda<\mu+\nu$. This and the definition of Macdonald functions $P_\lambda$ (triangular structure) imply 
\begin{equation*} 
P_\mu P_\nu = P_{\mu+\nu} + \text{lower order terms} .
\end{equation*} 
On the other hand, by definition \cite[Eq.\ (7.1')]{Mac95}, 
\begin{equation} 
\label{PPP}
P_\mu(x;q,t) P_\nu(x;q,t) = \sum_{\lambda} f^{\lambda}_{\mu\nu}(q,t) P_\lambda(x;q,t). 
\end{equation} 
Thus, by comparison, 
\begin{equation}
\label{f1}  
f^\lambda_{\mu\nu}(q,t)\equiv 0\ \ \text{ unless } \ \ \lambda\leq \mu+\nu, \ \ \ f^{\mu+\nu}_{\mu\nu}(q,t)=1. 
\end{equation} 
Substituting $(\lambda,\mu,\nu,q,t)\to(\lambda^\prime,\mu^\prime,\nu^\prime,t,q)$, and using that $f^\lambda_{\mu\nu}$ is non-zero only if $|\mu|+|\nu|=|\lambda|$, the latter is equivalent to 
\begin{equation*} 
f^{\lambda^\prime}_{\mu^\prime\nu^\prime}(t,q)\equiv 0\ \ \text{ unless } \ \ \mu\cup\nu\leq \lambda, \ \ \ f^{(\mu\cup\nu)^\prime}_{\mu^\prime\nu^\prime}(t,q)=1
\end{equation*} 
(since $\lambda^\prime\leq\mu^\prime$ is equivalent to $\mu\leq\lambda$ provided $|\mu|=|\lambda|$, and $\mu^\prime+\nu^\prime=(\mu\cup\nu)^\prime$ \cite{Mac95}). 
This proves the first half of the result. The second half is obtained from \eqref{f1} using the formula 
\begin{equation*} 
f^{\lambda^\prime}_{\mu^\prime\nu^\prime}(t,q)=\frac{b_\mu(q,t)b_\nu(q,t)}{b_{\lambda}(q,t)} f^\lambda_{\mu\nu}(q,t) 
\end{equation*} 
following from \eqref{PPP} by applying $\omega_{q,t}$ in \eqref{omqt}, renaming $(\lambda,\mu,\nu,q,t)\to(\lambda^\prime,\mu^\prime,\nu^\prime,t,q)$, and using \eqref{Q} and \eqref{duality}.
\qed

\subsection{Detail in the proof of Lemma~\ref{lem:SPExp}} 
\label{app:detail} 
For the convenience of the reader, we provide a self-contained proof of the fact that $P_{\lambda/\mu^\prime}((x_1,\ldots,x_n);q,t)\equiv 0$ if $\lambda^\prime_j-\mu_j>n$ for some $j\geq 1$; cf.~(7.15) in \cite[Section VI]{Mac95}.

By definition, 
\begin{equation*} 
P_{\lambda/\mu^\prime}((x_1,\ldots,x_n);q,t) = \sum_{\nu} f^{\lambda^\prime}_{\mu\nu^\prime}(t,q) P_{\nu}((x_1,\ldots,x_n);q,t) 
\end{equation*} 
where the sum on the right-hand side is only over partitions $\nu$ of length less or equal to $n$ (since $P_{\nu}((x_1,\ldots,x_n);q,t)\equiv 0$ otherwise), i.e., all partitions $\nu$ contributing to this sum satisfy 
\begin{equation} 
\nu_j^\prime\leq \nu_1^\prime=\ell(\nu)\leq n
\end{equation} 
for all $j=1,2,\ldots$. 

{By Lemma~\ref{lem:f}, the coefficients $ f^{\lambda^\prime}_{\mu\nu^\prime}(t,q)$ are non-zero only if $\mu^\prime\cup\nu\leq\lambda$, equivalent to $\lambda^\prime\leq\mu+\nu^\prime$, i.e., 
$$
\lambda^\prime_j-\mu_j\leq \nu^\prime_j
$$
for all $j=1,2,\ldots$. This implies the result. \qed

\subsection{Proof of \eqref{Id}}
\label{app:Id}
We consider the complex function 
$$
f(z) :=  \prod_{i=1}^n \frac{z - t^{-1/2} x_i}{z-t^{1/2} x_i} \cdot \prod_{j=1}^m  \frac{z - q^{1/2} y_j}{z-q^{-1/2} y_j}, 
$$
assuming fixed generic values for $x_i$ and $y_j$ (so that all poles of $f(z)$ are of order 1), and compute
$$
\lim_{\xi\to\infty}\oint_{|z| = \xi } \frac{dz}{2\pi\ii z}f(z)
$$
in two ways: first, using that $f(z)=1+O(1/z)$ as $|z|\to\infty$, which gives 1; second, invoking the residue theorem (for sufficiently large $\xi$), which gives the sum of all residues. This yields the identity 
\begin{multline*}
1 =  \sum_{i=1}^n \lim_{z\to t^{1/2}x_i} (z-t^{1/2}x_i) \frac{f(z)}{z} +   \sum_{j=1}^m \lim_{z\to q^{-1/2}y_j} (z-q^{-1/2}y_j)\frac{f(z)}{z} + \lim_{z\to 0}f(z)\\ 
=   \sum_{i=1}^n (1-t^{-1})t^{1-n} \prod_{i'\neq i}^n  \frac{tx_i - x_{i'}}{x_i-x_{i'}} \prod_{j=1}^m  \frac{t^{1/2} x_i - q^{1/2} y_j}{t^{1/2} x_i-q^{-1/2} y_j} 
\\ + \sum_{j=1}^m (1-q)q^{m-1}\prod_{i=1}^n \frac{q^{-1/2} y_j - t^{-1/2} x_i}{q^{-1/2} y_j-t^{1/2} x_i} \prod_{j'\neq j}^m  \frac{q^{-1} y_j - y_{j'}}{y_j -y_{j'}} 
+  t^{-n}q^m 
\end{multline*} 
which clearly is equivalent to the identity \eqref{Id}.

While we assumed generic $x_i$ and $y_j$ in this argument, it is clear by continuity that the result holds true for arbitrary complex  $x_i$ and $y_j$.  \qed

\subsection{Proof of \eqref{A1}}
\label{app:check} 
We give a detailed proof of the identity in \eqref{A1} assuming \eqref{xixip1} and \eqref{A2}. We recall that $i\in\{1,\ldots,n\}$ is fixed. 

As explained in the main text, we only need to show that the common integrand of the two integrals in \eqref{A1}  is an analytic function of the complex variable $x_i$ in the region $\xi\leq |x_i|\leq \xi/q$, provided the other variables are fixed as in \eqref{A2} and \eqref{xixip1} holds true. Since $P(x^{-1},y^{-1})  T_{q,x_i} \overline{Q}(x,y)$ is an analytic function of $(x,y)$ in $(\mathbb{C}^*)^n\times (\mathbb{C}^*)^m$, we only need to investigate the function $W_i^*(x,y) T_{q,x_i}W_i(x,y)$, which is equal to 
$$
  \prod_{i'\neq i}^n \frac{(x_{i'}/x_i;q)_\infty}{(tx_{i'}/x_i;q)_\infty}  \frac{(qx_i/x_{i'};q)_\infty}{(tqx_i/x_{i'};q)_\infty} \cdot \prod_{j=1}^m \frac{1}{(1-q^{-1/2}t^{1/2}y_j/x_i)}\frac{1}{(1-q^{1/2}t^{1/2}x_i/y_j)},
$$
cf.~\eqref{Wi}. We verify that no poles of the four types of factors in the latter expression are located in the pertinent $x_i$-region that we parametrise as follows: $|x_i|=\xi a/q$ with $q\leq a\leq 1$. 

The poles of the first type of factors are only encountered when $|q^k tx_{i'}/x_i| = q^k tq/a=1$ for $k\in\Z_{\geq 0}$, i.e., $a=q^{k+1}t<q$; there are no such poles for $q\leq a\leq 1$. The poles of the second type of factors are all located in the subsets $|q^k tqx_i/x_{i'}|=q^k t a=1$  for $k\in\Z_{\geq 0}$, i.e., $a=1/q^k t>1$; and again there are no such poles for $q\leq a\leq 1$. The poles of the third type of factors only occur for $|q^{-1/2}t^{1/2}y_j/x_i| = q^{1/2}t^{1/2}\xi^\prime/\xi a=1$, i.e., for $\xi/\xi^\prime=q^{1/2}t^{1/2}/a$; if $q\leq a\leq 1$, these poles occur for $q^{1/2}t^{1/2}\leq \xi/\xi^\prime\leq q^{-1/2}t^{1/2}$, and there are no such poles if \eqref{xixip1} holds true. Finally, the fourth type only have poles in the subsets $|q^{1/2}t^{1/2}x_i/y_j|=q^{-1/2}t^{1/2}a \xi/\xi^\prime=1$, i.e., for $\xi/\xi^\prime = q^{1/2}t^{-1/2}/a$; if $q\leq a\leq 1$, these poles occur for $q^{1/2}t^{-1/2}\leq \xi/\xi^\prime \leq q^{-1/2}t^{-1/2}$, and again there are no such poles if \eqref{xixip1} holds true. \qed

\section{The case $n=m=1$}
\label{app:nm1}
In this appendix, we consider the special case $n=m=1$, where we can verify by simple direct computations, that the sesquilinear form of Definition \ref{def:prod} is independent of the integration radii $\xi,\xi^\prime>0$ as long as $\xi/\xi^\prime>M(q,t):=\max_{\delta=\pm 1}\big(q^{\frac{\delta}{2}}t^{-\frac{\delta}{2}}\big)$ or 
 $\xi/\xi^\prime<\min_{\delta=\pm 1}\big(q^{\frac{\delta}{2}}t^{-\frac{\delta}{2}}\big)=1/M(q,t)$. 

In this case, the conditions \eqref{Lamnm}--\eqref{qinv} on elements in $\Lambda_{1,1;q,t}$, $P=P(x,y)$ with $(x,y)\in\C\times\C$, reduce to the symmetry condition \eqref{qinv} for $i=j=1$, i.e.,  
\begin{equation*} 
\big(T_{q,x}-T_{t^{-1},y}\big)P(x,y) = P(qx,y)-P(x,t^{-1}y)=0\  \text{ at } \  q^{1/2}x = t^{-1/2}y; 
\end{equation*} 
this can be written as 
\begin{equation} 
\label{qinv11} 
P(q^{1/2}x,t^{1/2}x)- P(q^{-1/2}x,t^{-1/2}x)= 0
\end{equation} 
(we inserted $y=q^{1/2}t^{1/2}x$ and renamed $q^{1/2}x\to x$). Moreover, the right-hand side of \eqref{prod} is of the form
\begin{equation*} 
I(\xi,\xi^\prime) := \oint_{|x|=\xi}\frac{\dd x}{2\pi \ii x}\oint_{|y|=\xi^\prime}\frac{\dd y}{2\pi \ii y}\frac{f(x,y)}{(1-q^{-1/2}t^{1/2}x/y)(1-q^{-1/2}t^{1/2}y/x)},
\end{equation*} 
where $\xi,\xi^\prime>0$ satisfy \eqref{xixip} and $f=PQ^*$ has the symmetry property \eqref{qinv11}. 
Clearly, by Cauchy's integral theorem, $I(\xi,\xi^\prime)$ is unchanged by continuous deformations of the integration contours as long as the singularities of the integrand are avoided; this is the case if \eqref{xixip} holds. 
Clearly, it suffices to show that $I(\xi,\xi^\prime)=I(\xi^\prime,\xi)$. 
For simplicity, we restrict attention to $t>q$, so that $M(q,t)=q^{-1/2}t^{1/2}$ and $1/M(q,t)=q^{1/2}t^{-1/2}$.  

Deforming the $y$-contour to $|y|=\xi$, we pick up a residue at the simple pole $y=q^{\frac{1}{2}}t^{-\frac{1}{2}}x$, and thus obtain 
\begin{equation}
\label{IxixipAlt} 
I(\xi,\xi^\prime) = I(\xi,\xi) + \frac{q}{q-t}\oint_{|x|=\xi}\frac{\dd x}{2\pi \ii x}f(x,q^{1/2}t^{-1/2}x).
\end{equation} 
Now deforming the $x$-contour in $I(\xi,\xi)$ to $|x|=\xi^\prime$, picking up a residue at $x=q^{\frac{1}{2}}t^{-\frac{1}{2}}y$, we find that
\begin{equation*} 
I(\xi,\xi^\prime) - I(\xi^\prime,\xi) = \frac{q}{q-t}\oint_{|x|=\xi}\frac{\dd x}{2\pi \ii x}\big(f(q^{-1/2}x,t^{-1/2}x) - f(q^{1/2}x,t^{1/2}x)\big)
\end{equation*} 
(where we have applied the scaling $x\to q^{-1/2}x$ to the former residue integral and $x\to t^{1/2}x$ to the latter, which, by Cauchy's theorem, does not alter their values). Since $f$ satisfies \eqref{qinv11}, the integral in the right-hand side is zero, which proves the claim. This highlights the importance of the symmetry condition \eqref{qinv}. 

It is interesting to note that if we were to deform the integration radii $\xi,\xi^\prime>0$ from the region $\xi/\xi^\prime>M(q,t)$ or $\xi/\xi^\prime<1/M(q,t)$ into the excluded region $1/M(q,t) < \xi/\xi^\prime<M(q,t)$, then our sesquilinear form $\langle P,Q\rangle_{1,1;q,t}^\prime$ would be changed by the addition of a residue term such as the integral in \eqref{IxixipAlt}. By considering specific examples, is readily seen that this spoils orthogonality of the super-Macdonald polynomials as well as non-negativity.

As we now sketch, the former conclusion can also be reached by demonstrating, through direct and straightforward computations, that the operator $\cM_{1,1;q,t}$, or equivalently,
$$
M_{1,1;q,t} = \frac{1}{1-q} A(x,y;q,t) T_{q,x} + \frac{1}{1-t^{-1}} B(x,y;q,t) T_{t^{-1},y},
$$
with coefficients
$$
A(x,y;q,t) = \frac{ t^{1/2} x - q^{1/2} y }{ t^{1/2} x - q^{-1/2} y } , \quad B(x,y;q,t) = \frac{q^{-1/2} y - t^{-1/2} x}{q^{-1/2} y - t^{1/2} x},
$$
is not self-adjoint when $\xi,\xi^\prime>0$ are chosen in the excluded region, cf.~Lemma \ref{lem:selfAdj}. 

For simplicity, we restrict attention to $\xi=\xi^\prime=1$ and $q<t$, but similar arguments apply for $q>t$ and other values of $\xi,\xi^\prime>0$ such that $1/M(q,t) < \xi/\xi^\prime<M(q,t)$. To start with, we consider
\begin{multline*}
\oint_{|x|=1}\frac{d x}{2\pi \ii x} \oint_{|y|=1}\frac{d y}{2\pi \ii y} \Delta_{1,1}(x,y;q,t) Q^{\ast}(x,y) A(x,y;q,t) T_{q,x}P(x,y) \\
 =\oint_{|x|=1}\frac{d x}{2\pi \ii x} \oint_{|y|=1}\frac{d y}{2\pi \ii y} \frac{Q^{\ast}(x,y) A(x,y;q,t) T_{q,x} P(x,y)}{(1-q^{-1/2} t^{1/2} x / y)(1- q^{-1/2} t^{1/2} y / x)}.
\end{multline*}
By shifting $x \to q^{-1} x$, the action of $T_{q,x}$ is transferred onto the polynomial $Q$ and we obtain
$$
\oint_{|x|=q}\frac{d x}{2\pi \ii x} \oint_{|y|= 1 }\frac{d y}{2\pi \ii y} \frac{  P(x,y) ( T_{q,x}Q)^\ast(x,y) A(q^{-1} x, y;q,t)}{(1-q^{-3/2} t^{1/2} x / y)(1- q^{1/2} t^{1/2} y / x)},
$$
where
$$
\frac{A(q^{-1} x, y;q,t)}{(1-q^{-3/2} t^{1/2} x / y)(1- q^{1/2} t^{1/2} y / x)} = \Delta_{1,1}(x,y;q,t)A^\ast(x,y;q,t).
$$
Deforming the $x$-contour back to $|x|=1$, we pick up residues at and only at the simple poles $x=q^{1/2}t^{-1/2}y$ and $x=q^{1/2}t^{1/2}y$, which yield the contribution
\begin{multline*}
\frac{q}{1-t}\oint_{|y|=1} \frac{dy}{2\pi \ii y}\Big(P(q^{1/2} t^{1/2} y , y) Q^{\ast}(q^{-1/2} t^{1/2} y , y)\\
 - P(q^{1/2} t^{-1/2} y, y) Q^\ast(q^{-1/2} t^{-1/2}y,y)\Big).
\end{multline*}

Rewriting the integral involving $B(x,y;q,t)T_{t^{-1},y}$ in a similar manner produces no further residue terms (under our parameter constraints). The upshot is that
\begin{multline*}
\oint_{|x|=1}\frac{d x}{2\pi \ii x} \oint_{|y|=1}\frac{d y}{2\pi \ii y} \Delta_{1,1}(x,y;q,t) Q^{\ast}(x,y) M_{1,1;q,t}P(x,y)\\ - \oint_{|x|=1}\frac{d x}{2\pi \ii x} \oint_{|y|=1}\frac{d y}{2\pi \ii y} \Delta_{1,1}(x,y;q,t)  P(x,y) (M_{1,1;q,t}Q)^{\ast}(x,y)\\
= \frac{q}{(1-q)(1-t)}\oint_{|y|=1} \frac{dy}{2\pi \ii y}\Big(P(q^{1/2} t^{1/2} y , y) Q^{\ast}(q^{-1/2} t^{1/2} y , y)\\
 - P(q^{1/2} t^{-1/2} y, y) Q^\ast(q^{-1/2} t^{-1/2}y,y)\Big),
\end{multline*}
where, in general, the right-hand side is non-zero.

\section*{Acknowledgments}
We would like to thank O.A.~Chalykh, M.~Noumi, A.N.~Sergeev, J.~Shiraishi, and A.P.~Veselov for helpful discussions. 
The work of F.A. was partially  carried out as a JSPS International Research Fellow and has been supported by the Japan Society for the Promotion of Science (Grant Nos.~P17768 and 17F17768). M.H.~acknowledges financial support from the Swedish Research Council (Reg.~nr.~2018-04291). E.L.~acknowledges support from the Swedish Research Council (Reg.~nr.~2016-05167), and by the Stiftelse Olle Engkvist Byggm\"astare (Contract 184-0573).

\bibliographystyle{amsalpha}

\end{document}